\renewcommand{\thesubfigure}{(\roman{subfigure})}
\makeatletter \renewcommand{\@thesubfigure}{\thesubfigure \space}
\renewcommand{\p@subfigure}{} \makeatother
\numberwithin{equation}{section} \allowdisplaybreaks
\theoremstyle{plain}
\newtheorem{thm}{Theorem}[section]
\theoremstyle{remark}
\newtheorem{remark}{Remark}
\numberwithin{equation}{section}
\newtheorem{Lemma}[thm]{Lemma}
\newtheorem{prop}[thm]{Proposition}
\begin{document}
\title{\bf{A parametric Bayesian level set approach for acoustic source identification using multiple frequency information}}

\author{\hspace{-2.5cm}{\small Zhi-Liang Deng$^{1}$, Xiao-Mei Yang$^2$ \thanks{
Corresponding author: yangxiaomath@163.com; yangxiaomath@home.swjtu.edu.cn
Supported by NSFC No. 11601067, 11771068 and No.11501087, the Fundamental Research Funds for the Central Universities No. 2682018ZT25 and ZYGX2018J085.
} and Jiangfeng Huang$^{1}$} \\
\hspace{-1.5cm}{\scriptsize $1.$ School of Mathematical Sciences,  University of Electronic Science and Technology of China,
Chengdu 610054, China}\\
\hspace{-2.5cm}{\scriptsize $2.$ School of Mathematics,
Southwest Jiaotong University
Chengdu 610031, China}
}
\date{}
\maketitle

\begin{abstract}
\noindent The reconstruction of the unknown acoustic source is studied using the noisy multiple frequency data on a remote closed surface. Assume that the unknown source is coded in a spatial dependent piecewise constant function, whose support set is the target to be determined. In this setting, the unknown source can be formalized by a level set function. The function is explored with Bayesian level set approach.  
To reduce the infinite dimensional problem to finite dimension, we parameterize the level set function by the radial basis expansion.  
The well-posedness of the posterior distribution is proven. The posterior samples are generated according to the Metropolis-Hastings algorithm and  the sample mean is used to approximate the unknown.
Several shapes are tested to verify the effectiveness of the proposed algorithm. These  numerical results show that the proposed algorithm is feasible and competitive with the Mat\'ern random field for the acoustic source problem.



\noindent \textbf{Key words:} Level set; Bayesian inversion; Acoustic source; 
Radial basis; Mat\'ern random field prior

\noindent \textbf{MSC 2010}: 35R20, 65R20
\end{abstract}

\section{Introduction}
Sound source detection has been the active research topics in the field of acoustic scattering problems due to the extensive applications to various fields such as military surveillance to detect flying objects or armored vehicles and a humanoid robot auditory system. A main object to be determined for this kind of problems is the location and shape of the sound source. These information are coded in a spatial dependent function with compact support.  For reconstructing this function, the multiple frequency sound wave data in a remote close surface are usually collected. The observed data is related to the unknown function by a partial differential equation. 
The task of such source discovery is an inverse problem. It is usually ill-posed, i.e.,  the solution is highly sensitive to changes in the observed data. 
A standard way of solving inverse problems for seeking the source function is to minimize some functional related with the collected data and simulated data computed with a candidate source function. Some iterative procedure is typically applied to achieve the minimizer of the functional. To cope with the numerical instability or make the computation more stable, a regularized term is imposed on the minimization function, named the regularization method.

In this paper, we consider using the Bayesian level set technique to reconstruct the unknown acoustic source function with compact support. The level set approach is proposed in \cite{osher1, osher2} to track the wave front interface.  Compared with the classical curve parameterization method, the level set approach allows for topological changes to be detected during the course of algorithms.  This advantage enables it to become the popular technique to deal with interface problems. 
In inverse problems, it has also been discussed extensively \cite{burger, dorn, ito, santosa1, tai}. It can be seen that these studies cope with the shape reconstruction problem in the classical framework, rarely analyze the uncertainty of the unknown variables. The uncertainty is often characterized by some statistical techniques. And moreover the characterization provides the estimation for the unknown variables and their reliability information. To quantify the uncertainty, the Bayesian level set algorithm is studied in \cite{dunlop_1, iglesias1}.  One can refer to \cite{lorentzen1, lorentzen2, ping, tai} for some related applications of this algorithm. 
In Bayesian level set inversion, the reconstruction target is the posterior distribution, which provides us a basis for quantifying the uncertainty.
 In this process, the prior distribution needs to be treated firstly, actually before the data is acquired. 
 We use the Whittle-Mat\'ern random field as the prior information of the level set function. Such random field has extensive applications and is studied by many authors \cite{lindgren, rasmussen, roininen}. In \cite{chada, dunlop_1}, it is used to characterize the prior of the level set function. As seen in these references, the simulation of the random field can be obtained by solving some related stochastic differential equation or utilizing the Karhunen-Lo\`eve expansion by the eigen-system of the covariance function. The stochastic differential equation can be solved by the finite difference or the finite finite approach \cite{lindgren, rasmussen, roininen}. In regular domain, the eigen-system can be in general obtained directly \cite{chada, dunlop_1}. However, in non-regular domain, it is still necessary to use some numerical approximation algorithms to obtain the approximated eigen-system \cite{jiang}.  It can be seen that the prior discretization, whatever by the stochastic differential equation or the Karhunen-Lo\`eve expansion, will induce a high dimensional unknown variable. This may cause huge computation burden in Bayesian inversion process. To reduce the computation complexity, some acceleration algorithms have been explored for  statistical inversion. It is well-known that the surrogate model algorithms can reduce the numerical complexity effectively \cite{deng}.  The surrogate model methods try to use the polynomial chaos expansion to replace the forward model or the likelihood function. And in doing so, we just need to evaluate the polynomials instead of the forward model or likelihood function, which significantly reduces the numerical burden in Bayesian inversion. 
 
In our problem, we focus on using the radial basis function (RBF) expansion to parameterize the level set function. The parametric level set approach is considered  in the classical frame \cite{aghasi, liud, pingen}. To the authors' knowledge, it still has not been studied in statistical inversion. We compare this numerical effects for  using the Whittle-Mat\'ern random field prior with the RBF expansion prior. And the well-posdeness of the posterior distribution based on the RBF expansion prior is explained.

The remainder is organized in the following: In Section 2, we depict the inverse acoustic wave source reconstruction problem. In Section 3, the Bayesian level set approach based on the Whittle-Mat\'ern random field prior is discussed. 
In Section 4, we propose the RBF parametric level set algorithm to solve the reconstruction problem. We give some numerical tests to verify the proposed algorithms in Section 5.




\section{Acoustic source problem}
The pressure $u$ of time-harmonic wave radiated from source with density $A(k)f(x)$ is modeled by \cite{eller}
\begin{align}
\label{in1.1}
&(\triangle+k^2)u(x, k)=A(k)f(x), x\in \mathbb{R}^d,\\
&\lim_{r\rightarrow \infty}r^{\frac{d-1}{2}}{\big\{}\frac{\partial u}{\partial r}-iku{\big\}}=0, \,\, r=|x|,\label{in1.2}
\end{align}
where $k=\omega/c_0$ is the wave number, $\omega$ is the radial frequency,  $c_0$ is the speed of sound, $d$ is the spatial dimension and \eqref{in1.2} is the Sommerfeld radiation condition, $A(k)$ is a given function and $f$ is the unknown with compact support $D\subset\mathbb{R}^d$ to be determined. 
The multiple frequency observed data for the acoustic wave field is collected in a remote closed surface $\partial\Omega$. Suppose the acoustic source domain $D$ is compactly contained in  the region $\Omega$  bounded by the closed surface $\partial\Omega$. 
For simplicity, we consider 2-dimensional case, i.e., $d=2$ and let $A(k)\equiv 1$.  
According to the potential representation, we have  \cite{eller}
\begin{align}
u(k, x)&=\int_D f(y)H_0^{(1)}(k|x-y|)dy, \,\, x\in\partial\Omega, \label{in1.4}
\end{align}
where $H_0^{(1)}$ is the cylindrical Hankel function. The data are collected on finite points $\vec{x}:=\{x_1, x_2, \cdots, x_N\}\subset\partial\Omega$ for finite wave numbers $\{k_1, k_2, \cdots, k_M\}\subset [k_{\min}, k_{\max}]$.  
Assume that the noise is additive Gaussian,  $\eta\sim N(0, \delta^2 I)$, $\delta$ is the noise mean variance. By \eqref{in1.4}, the system can be written as
\begin{align}\label{in1.5}
\mathcal{H}_{k_m}f+\eta_m=[u(x_j, k_m)]_{j=1}^N:=b_m \,\, \text{for}\,\, m=1, 2, \cdots M,
\end{align}
where $\mathcal{H}_{k}: L^2(D)\rightarrow \mathbb{C}^{N}$  depends on wave number $k$ and $\eta_m$ is the noise. Denote $\mathcal{H}:=[\mathcal{H}_{k}]_{k=k_1}^{k_M}$ and $b=[b_1; b_2; \cdots; b_M]$. In this paper, we assume that the function $f$ is known a priori to have the form
\begin{align}
\label{lev2.1}
f(x)=\sum_{l=1}^n w_l \mathbb{I}_{D_l}(x);
\end{align}
here $\mathbb{I}_D$ denotes the indicator function of subset $D\subset\mathbb{R}^2$, $\{D_l\}_{l=1}^n$ are subsets of $D$ such that
$\bigcup_{l=1}^n\bar{D}_l=\bar{D}$ and $D_l\bigcap D_j=\varnothing$ ($l\neq j$), the $\{w_l\}_{l=1}^n$ are known positive constants. In this setting, the regions $D_l$ determine the unknown function and therefore become the primary unknowns. 
Some papers have discussed the acoustic source reconstruction problems \cite{alzaalig,bao,eller,wang_2}. From these existed results, one can see that the domain $D$ can be determined uniquely by the multiple frequency data.

\section{Level set Bayesian inversion with Whittle-Mat\'ern random field prior}

As stated above, instead of reconstructing the acoustic source, we solve the recovery problem of interfaces $\partial D_l$ or regions $D_l$. It can be devised as an interface tracking problem. A simple and versatile approach for this kind of problems is the level set technique. In this method, the interface $\partial D_l$ is characterized by a so-called level set function. Actually, the interface $\partial D_l$ is represented as the contour curve at some fixed value of the level set function, which is a higher dimensional function.
For our problem, the level set function $\varphi$ is defined by the following way \cite{dorn,dunlop_1,iglesias1}:
\begin{align}\label{lev2.2}
D_l=\{x\in D\mid c_{l-1}\leq\varphi(x)<c_{l}\}\,\,\text{for}\,\, l=1, 2, \cdots, n,
\end{align}
where $-\infty=c_0<c_1<\cdots<c_n=\infty$ are constants.
By this characterization, the source function is determined. Define the level set map
\begin{align}
\label{lev2.3}
\mathcal{G}: C(\bar{D})\rightarrow L^2(D),\,\,
\mathcal{G}(\varphi)=f.
\end{align}
Since there exist many different $\varphi$ determine the same $f$, the level set map is not an injection. However, if the function $\varphi$ is fixed, the function $f$ is uniquely specified. 
With \eqref{in1.5} and \eqref{lev2.3}, we have the following operator equation
\begin{align}
\label{lev2.4}
\mathcal{K}(\varphi):=\mathcal{H}\circ\mathcal{G}(\varphi)=b.
\end{align}
We deal with the equation in the frame of Bayesian inversion. Bayesian perspective views the involved quantities $\varphi, b, \eta$ as random variables. Denote the random variable $\varphi|b$,  $\varphi$ given $b$. The solution of Bayesian reconstruction is the posterior probability distribution $\mu(\varphi|b)$  that expresses our beliefs regarding how likely the different parameter values are. 
The posterior density is denoted by $\pi(\varphi|b)$. Bayes' formula shows that
\begin{align}
\label{lev2.5}
\pi(\varphi|b)=\frac{\pi(b|\varphi)\pi(\varphi)}{\pi(b)}\propto \pi(b|\varphi)\pi(\varphi),
\end{align}
where $\pi(b|\varphi)$ is the likelihood function, $\pi(\varphi)$ the prior density and $\pi(b)$ the constant. Since the noise is additive Gaussian $\eta\sim N(0, \delta^2I)$, the likelihood function is given by
\begin{align}
\label{lev2.6}
\pi(b|\varphi)\propto\exp(-\frac{|\mathcal{K}(\varphi)-b|^2}{2\delta^2}):=\exp(-\Phi(\varphi)).
\end{align}

 A frequently used prior for the unknown $\varphi$ is the Whittle-Mat\'ern random field $N(0, C)$. The covariance function is given by
\begin{align}
\label{pr4.1}
C(x)=\frac{2^{1-\nu}}{\Gamma(\nu)}(\frac{|x|}{l})^\nu K_\nu(\frac{|x|}{l}), \,\, x\in\mathbb{R}^d,
\end{align}
where $\nu>0$ is the smoothness parameter, $K_\nu$ is the modified Bessel function of the second kind of order $\nu$ and $l$ is the length-scale parameter. The integer value of $\nu$ determines the mean square differentiability of the underlying process, which matters for predictions made using such a model. This distribution is discussed by many authors \cite{lindgren, rasmussen, roininen}.  A method for explicit, and computationally efficient, continuous Markov representations of Gaussian Mat\'ern fields is derived  \cite{lindgren}.
The method is based on the fact that a Gaussian Mat\'ern field on $\mathbb{R}^d$
can be viewed as a solution to the stochastic partial differential equation (SPDE)
\begin{align}\label{pr4.2}
(I-l^2\triangle)^{(\nu+d/2)/2}\varphi=\sqrt{\alpha l^2}W,
\end{align}
where $W$ is the Gaussian white noise and the constant $\alpha$ is 
\begin{align*}
\alpha:=\sigma^2\frac{2^d\pi^{d/2}\Gamma(\nu+d/2)}{\Gamma(\nu)}.
\end{align*}
Here $\sigma^2$ is the variance of the stationary field.
The operator $(I-l^2\triangle)^{(\nu+1)/2}$ is a pseudo-differential operator defined by its Fourier transform. When $\nu\in\mathbb{Z}$, we can solve \eqref{pr4.2} by the finite element method with suitable boundary condition. The stochastic weak solution of the SPDE \eqref{pr4.2} is found by
requiring that
\begin{align}\label{mart3.8}
(\psi, (I-l^2\triangle)^{(\nu+d/2)/2}\varphi)=(\psi, \sqrt{\alpha l^2}W),
\end{align}
where $\psi\in L^2(D_1)$. We use the linear element to solve the weak formulation with Dirichlet boundary condition, i.e., $\varphi\mid_{\partial D_1}=0$. 


For Bayesian inversion, the well-posedness of the posterior distribution $\pi(\varphi|b)$ can be established by some assumptions on the forward operator and the prior distribution on $\varphi$ \cite{iglesias1, stuart}. It can be seen from \cite{iglesias1} that this well-posedness requires the continuity of the level set map.
 In \cite{iglesias1}, this point has been discussed and is given in the following proposition
\begin{prop}\cite{iglesias1}\label{iglesiaspro1}
For $\varphi\in C(\bar{D})$ and $1\leq q<\infty$, the level set map $\mathcal{G}: C(\bar{D})\rightarrow L^q(D)$ is continuous at $\varphi$ if and only if $\mathfrak{m}(\bar{D}_l\cap \bar{D}_{l+1})=0$ for all $l=1, 2, \cdots, n-1$. Here $\mathfrak{m}$ denotes the Lebesgue measure and $\bar{D}_l\cap \bar{D}_{l+1}$ is the $l$-th level set $\{x\in D| \varphi(x)=c_l\}$.
\end{prop}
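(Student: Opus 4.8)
The plan is to prove the equivalence as two separate implications, viewing $\mathcal{G}$ concretely as $\mathcal{G}(\varphi)=\sum_{l=1}^n w_l\,\mathbb{I}_{D_l(\varphi)}$ with $D_l(\varphi)=\{x\in D:c_{l-1}\le\varphi(x)<c_l\}$, measuring continuity through the sup-norm $\|\cdot\|_{C(\bar{D})}$ on the source and the $L^q(D)$-norm on the target. Throughout I write $\Gamma_l:=\{x\in\bar{D}:\varphi(x)=c_l\}=\bar{D}_l\cap\bar{D}_{l+1}$ for the interior level sets, $l=1,\dots,n-1$. The guiding idea is that $\mathcal{G}(\psi)$ and $\mathcal{G}(\varphi)$ can only disagree near these level sets, so everything reduces to measuring a thin neighborhood of $\bigcup_l\Gamma_l$.

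For the sufficiency direction (null level sets $\Rightarrow$ continuity), I would first localize the discrepancy. For $\delta>0$ set $A_\delta:=\bigcup_{l=1}^{n-1}\{x\in D:|\varphi(x)-c_l|<\delta\}$. The elementary observation to establish is: if $\|\psi-\varphi\|_{C(\bar{D})}<\delta$ and $x\notin A_\delta$, then $\varphi(x)$ lies in some interval $[c_{l-1},c_l)$ with margin $\delta$ from both finite endpoints, whence $\psi(x)$ lies in the same interval and $\mathcal{G}(\psi)(x)=w_l=\mathcal{G}(\varphi)(x)$. Thus the two functions differ only on $A_\delta$, and since the values $w_l$ are bounded,
\begin{align*}
\|\mathcal{G}(\psi)-\mathcal{G}(\varphi)\|_{L^q(D)}^q\le \Big(2\max_l w_l\Big)^q\,\mathfrak{m}(A_\delta).
\end{align*}
The sets $A_\delta$ decrease as $\delta\downarrow 0$ with $\bigcap_{\delta>0}A_\delta=\bigcup_{l=1}^{n-1}\Gamma_l$ (using that there are finitely many thresholds); since $\mathfrak{m}(D)<\infty$, continuity from above of Lebesgue measure yields $\mathfrak{m}(A_\delta)\to\sum_{l}\mathfrak{m}(\Gamma_l)=0$ under the hypothesis. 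Given $\varepsilon>0$ I then choose $\delta$ to force the right-hand side below $\varepsilon$, which is exactly continuity of $\mathcal{G}$ at $\varphi$.

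For the necessity direction I would argue by contraposition: assume $\mathfrak{m}(\Gamma_{l_0})>0$ for some $l_0\in\{1,\dots,n-1\}$ and exhibit a sequence defeating continuity. Taking the constant shifts $\psi_j:=\varphi-1/j$ gives $\|\psi_j-\varphi\|_{C(\bar{D})}=1/j\to 0$. On $\Gamma_{l_0}$ one has $\varphi\equiv c_{l_0}$, so by the half-open convention $\mathcal{G}(\varphi)=w_{l_0+1}$ there; while for $j$ large enough that $1/j<c_{l_0}-c_{l_0-1}$ the shifted value satisfies $c_{l_0-1}\le c_{l_0}-1/j<c_{l_0}$, so $\mathcal{G}(\psi_j)=w_{l_0}$ on $\Gamma_{l_0}$. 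Restricting the $L^q$ integral to $\Gamma_{l_0}$ then gives
\begin{align*}
\|\mathcal{G}(\psi_j)-\mathcal{G}(\varphi)\|_{L^q(D)}^q\ge |w_{l_0}-w_{l_0+1}|^q\,\mathfrak{m}(\Gamma_{l_0})>0,
\end{align*}
a positive constant independent of $j$, contradicting continuity. This step uses that adjacent weights differ, $w_{l_0}\neq w_{l_0+1}$, which is implicit in the coding \eqref{lev2.1} and may be assumed.

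The routine analytic content here is light; the step needing genuine care is the sufficiency direction. The delicate point is justifying that the discrepancy set is contained in $A_\delta$ despite the \emph{half-open} definition of the $D_l$, together with the application of continuity from above, which relies on the finiteness $\mathfrak{m}(D)<\infty$. The half-open convention produces mismatches precisely on the level sets $\Gamma_l$, so it is harmless exactly when those sets are null; this is where the hypothesis enters and is the crux that makes the two sides of the equivalence meet.
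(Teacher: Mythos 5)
The paper does not actually prove this proposition; it is quoted verbatim from \cite{iglesias1}, and your argument is essentially the standard proof given there: for sufficiency, localizing the discrepancy between $\mathcal{G}(\psi)$ and $\mathcal{G}(\varphi)$ to the thin neighbourhood $A_\delta$ of the level sets and applying continuity from above of Lebesgue measure on the finite-measure set $D$; for necessity, the constant downward shift $\varphi-1/j$, which flips the value on a positive-measure level set. Both implications are carried out correctly, including the careful handling of the half-open convention. The only point to make explicit is the one you already flag: the necessity direction requires adjacent weights to be distinct, $w_l\neq w_{l+1}$ (otherwise the ``only if'' statement fails), which is assumed in \cite{iglesias1} but not stated alongside \eqref{lev2.1} in this paper.
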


\section{Parametric level set based on RBF expansion}

Bayesian inversion requires the numerical evaluation of the forward problem repeatedly for a large number of samples. An appropriate parameterization of the unknown function can significantly reduce the dimensionality of an inverse problem.  A low order model based on the radial basis expansion for the level set function is discussed in \cite{aghasi, liud, pingen}, where the level set function is represented by the radial basis function expansion. 
This representation provides flexibility in terms of its ability to characterize shapes of varying degrees with varying degrees of complexity as measured specifically by the curvature of the boundary \cite{aghasi}.  
 We represent the unknown level set function using the radial basis expansion 
\begin{align}\label{rbf4.1}
\varphi(x, a)=\sum_{s=1}^J a_s p(x, \theta_s),
\end{align}
where $a_s$ are the random coefficients, $p(x, \theta_s)$ the radial basis function and $\theta_s$ called the RBF centers. 
Here, we take the Gaussian radial basis function
\begin{align}\label{rbf4.2}
p(x, \theta_s; \lambda_s)=\exp(-\frac{|x-\theta_s|^2}{2\lambda_s^2}),
\end{align} 
where $\lambda_s$ are  the Gaussian widths at centers $\theta_s$.  The kernel is Mercer kernel and therefore has the following representation \cite{santin}
\begin{align}
\label{merck}
p(x, \theta_s; \lambda_s)=\sum_{\tau=1}^{\infty} \zeta_\tau \phi_\tau(x) \phi_\tau(\theta_s),
\end{align}
where $\zeta_n$ are the eigenvalues and $\phi_n$ are the eigenfunctions of the compact
and self-adjoint integral operator $T: L^2(D)\rightarrow L^2(D)$ defined by
\begin{align}
\label{merck1}
Tf(x)=\int_D p(x, \theta; \lambda)f(\theta)d\theta.
\end{align}
Moreover, the level set function $\varphi$ can be written as
\begin{align}
\label{merck2}
\varphi(x, a)&=\sum_{s=1}^J a_s \sum_{\tau=1}^{\infty} \zeta_\tau \phi_\tau(x) \phi_\tau(\theta_s)\nonumber\\
&=\sum_{\tau=1}^{\infty} \zeta_\tau[\sum_{s=1}^J a_s\phi_\tau(\theta_s)]   \phi_\tau(x).
\end{align}
Denote $a:=[a_1; a_2; \cdots; a_J]$.  We assume that the components $a_s$ $(s=1, 2, \cdots, J)$ are independent and identically distributed random variables. Denote the probability space by $(\mathbb{R}^J, \mathfrak{B}_J, \mu_a)$.
Define the map according to \eqref{rbf4.1}
\begin{align}
\label{rbf4.2_1}
  \begin{split}
  & \mathcal{T}:(\mathbb{R}^J, \mathfrak{B}_J, \mu_a)\rightarrow (C(\bar{D}), \Sigma, \mu_0), \\
  &\mathcal{T}(a)=\varphi. 
    \end{split}
\end{align}
The measure space $(C(\bar{D}), \Sigma, \mu_0)$ is the push-forward of $(\mathbb{R}^J, \mathfrak{B}_J, \mu_a)$ under $\mathcal{T}$. The measure $\mu_0$ is well-defined since the RBF is continuous. Actually, the new measure $\mu_0$ is defined by 
\begin{align}
\mu_0(A)=\mu_a(\mathcal{T}^{-1}(A))\,\, \text{for}\,\, A\in \Sigma. 
\end{align}

With the expansion \eqref{rbf4.1}, we have the recovery problem of the coefficient $a$
\begin{align}
\label{rbf4.4}
\mathcal{L}:\mathbb{R}^J\rightarrow \mathbb{C}^{NM},\,\,\mathcal{L}(a):=\mathcal{K}\circ\mathcal{T}(a)=\mathcal{H}\circ\mathcal{G}\circ\mathcal{T}(a)=b.
\end{align}
This likelihood function is given for \eqref{rbf4.4}
\begin{align}
\label{rbf4.5}
\pi(b|a)\propto\exp(-\frac{|\mathcal{L}(a)-b|^2}{2\delta^2}):=\exp(-\Psi(a; b)).
\end{align}
Therefore, the posterior density satisfies
\begin{align}
\label{rbfp4.6}
\pi(a|b)\propto \exp(-\Psi(a; b))\pi_a(a),
\end{align}
where $\pi_a$ is the prior density for $a$. 
\begin{thm}
If $a$ is $J$-dimensional Gaussian, i.e., $a\sim N(0, I)$, then $\mu_0$ is the Gaussian measure $N(0, \mathcal{C})$. The covariance function of this random field is of the form
\begin{align}\label{rbf4.6}
C(x, \tilde{x})=\sum_{s=1}^J \exp(-\frac{|x-\theta_s|^2+|\tilde{x}-\theta_s|^2}{2\lambda_s^2}).
\end{align}
\end{thm}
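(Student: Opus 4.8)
The plan is to exploit the fact that the representation \eqref{rbf4.1} is linear in the coefficient vector $a$, so that the random field $\varphi$ is simply a linear image of a Gaussian vector. First I would note that the map $\mathcal{T}(a)=\sum_{s=1}^{J}a_s\,p(\cdot,\theta_s)$ is a bounded linear operator from $\mathbb{R}^J$ into $C(\bar{D})$: the basis functions $p(\cdot,\theta_s)$ are fixed continuous functions and the range of $\mathcal{T}$ is the finite-dimensional subspace $\mathrm{span}\{p(\cdot,\theta_s)\}_{s=1}^{J}$. Since $\mu_a=N(0,I)$ is Gaussian on $\mathbb{R}^J$ and the push-forward of a Gaussian measure under a bounded linear map is again Gaussian, it follows that $\mu_0$, the push-forward of $\mu_a$ under $\mathcal{T}$, is a Gaussian measure on $C(\bar{D})$.

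To determine its mean and covariance I would verify the defining property of Gaussianity through one-dimensional projections. For any continuous linear functional on $C(\bar{D})$, represented by a signed Radon measure $\nu$, the scalar random variable $\int_{\bar{D}}\varphi(x,a)\,d\nu(x)=\sum_{s=1}^{J}a_s\int_{\bar{D}}p(x,\theta_s)\,d\nu(x)$ is a finite linear combination of the independent standard Gaussians $a_s$, hence one-dimensional Gaussian; this both reconfirms the Gaussianity and, by taking point evaluations, shows $\mathbb{E}[\varphi(x,a)]=\sum_{s}\mathbb{E}[a_s]\,p(x,\theta_s)=0$, so the mean vanishes.

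The covariance function then follows from a direct second-moment computation using $\mathbb{E}[a_s a_t]=\delta_{st}$, which is exactly the assumption $a\sim N(0,I)$:
\begin{align*}
C(x,\tilde{x})&=\mathbb{E}\big[\varphi(x,a)\,\varphi(\tilde{x},a)\big]\\
&=\sum_{s=1}^{J}\sum_{t=1}^{J}\mathbb{E}[a_s a_t]\,p(x,\theta_s)\,p(\tilde{x},\theta_t)\\
&=\sum_{s=1}^{J}p(x,\theta_s)\,p(\tilde{x},\theta_s),
\end{align*}
and substituting the Gaussian kernel \eqref{rbf4.2} yields the claimed form \eqref{rbf4.6}.

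The computation is elementary, so the only point I expect to state with care is the meaning of a Gaussian measure on the infinite-dimensional space $C(\bar{D})$. Because the range of $\mathcal{T}$ is finite-dimensional, $\mu_0$ is supported on that subspace and is a degenerate Gaussian measure; this causes no analytic difficulty, since the characterization via one-dimensional projections applies verbatim and the covariance operator $\mathcal{C}$ is simply the integral operator with kernel $C(x,\tilde{x})$. This degeneracy, rather than any genuine obstacle, is the main thing I would flag in the argument.
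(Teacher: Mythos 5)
Your proof is correct and follows essentially the same route as the paper's: the covariance is obtained by the identical direct second-moment computation using $\mathbb{E}[a_s a_t]=\delta_{st}$ and the linearity of \eqref{rbf4.1} in $a$. The paper's proof consists only of that computation, so your additional justification that $\mu_0$ is a (degenerate, finite-rank) Gaussian measure on $C(\bar{D})$ --- via the push-forward of $N(0,I)$ under the bounded linear map $\mathcal{T}$ and the one-dimensional projection characterization --- merely supplies a step the paper leaves implicit.
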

\begin{proof}
 We can get the covariance function of the random field 
\begin{align*}
&C(x, \tilde{x})=\text{cov}(x, \tilde{x})=\text{cov}(\varphi(x), \varphi(\tilde{x}))\nonumber \\
&\nonumber=[p(x, \theta_1; \lambda_1), p(x, \theta_2; \lambda_s), \cdots, p(x, \theta_J; \lambda_J)]
\left[\begin{array}{c}p(\tilde{x}, \theta_1; \lambda_1) \\p(\tilde{x}, \theta_2; \lambda_2) \\ \vdots\\p(\tilde{x}, \theta_J; \lambda_J)\end{array}\right]\\
&\nonumber=\sum_{s=1}^J p(x, \theta_s; \lambda_s)p(\tilde{x}, \theta_s; \lambda_s)=\sum_{s=1}^J p(x, \theta_s; \lambda_s)p(\tilde{x}, \theta_s; \lambda_s)\\
&=\sum_{s=1}^J \exp(-\frac{|x-\theta_s|^2+|\tilde{x}-\theta_s|^2}{2\lambda_s^2}).
\end{align*}
\end{proof}
By virtue of the parallelogram law, it follows that
\begin{align}
\text{cov}(x, \tilde{x})\leq J\exp(-\frac{|x-\tilde{x}|^2}{4\lambda^2}),
\end{align}
where $\lambda=\max_{s}\lambda_s$. 

Next, we discuss that the RBF expansion of the level set function can meet the condition in Proposition \ref{iglesiaspro1}, i.e., the measure of the level set vanishes almost surely on both $\mu_a$ and $\mu_0$.
Define the functional $\mathfrak{M}_c: C(\bar{D})\rightarrow \mathbb{R}^+\cup \{0\}$ by
$$\mathfrak{M}_c(\varphi):=\mathfrak{m}(\{x\in D| \varphi(x)=c\}).$$ 
\begin{Lemma}\label{lemm4.1}
For any $c\in\mathbb{R}$, $\mathfrak{M}_c$ is $\mu_0$-measurable.
\end{Lemma}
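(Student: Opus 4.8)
The plan is to exhibit $\mathfrak{M}_c$ as a pointwise limit of functionals that are each lower semicontinuous on $(C(\bar D), \|\cdot\|_\infty)$, and then invoke that a pointwise limit of Borel-measurable functions is Borel-measurable. Since $\Sigma$ is the Borel $\sigma$-algebra of $C(\bar D)$ (and $\mu_0$-measurability is only weaker than Borel measurability, allowing for completion), this will yield the claim.

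First I would approximate the level set by open tubular neighborhoods. For $n\in\mathbb{N}$ set
$$g_n(\varphi):=\mathfrak{m}\big(\{x\in D: |\varphi(x)-c|<1/n\}\big).$$
Because $\varphi$ is continuous, each set $\{x\in D:|\varphi(x)-c|<1/n\}$ is open, hence Lebesgue measurable, so $g_n$ is well defined and finite (as $\mathfrak{m}(D)<\infty$). These sets decrease in $n$ and their intersection is exactly $\{x\in D:\varphi(x)=c\}$; by continuity from above of $\mathfrak{m}$ (applicable since $\mathfrak{m}(D)<\infty$),
$$\mathfrak{M}_c(\varphi)=\lim_{n\to\infty}g_n(\varphi)\qquad\text{for every }\varphi\in C(\bar D).$$

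The key step is the lower semicontinuity of each $g_n$. Let $\varphi_k\to\varphi$ in $C(\bar D)$, i.e. uniformly on $\bar D$. If $|\varphi(x)-c|<1/n$, then by uniform convergence $|\varphi_k(x)-c|<1/n$ for all large $k$; hence the indicators satisfy $\liminf_k \mathbb{I}_{\{|\varphi_k-c|<1/n\}}(x)\ge \mathbb{I}_{\{|\varphi-c|<1/n\}}(x)$ pointwise on $D$. Applying Fatou's lemma (the integrands are dominated by $\mathbb{I}_D$, which is integrable) gives $\liminf_k g_n(\varphi_k)\ge g_n(\varphi)$, so $g_n$ is lower semicontinuous. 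Consequently $\{g_n>t\}$ is open for every $t\in\mathbb{R}$, so $g_n$ is Borel measurable on $C(\bar D)$. Finally, $\mathfrak{M}_c=\lim_n g_n$ is a pointwise limit of Borel-measurable functionals, hence Borel measurable, and therefore $\mu_0$-measurable.

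The main obstacle is the semicontinuity step, where the subtlety is that the indicator of the open tube may fail to converge exactly on the boundary layer $\{x:|\varphi(x)-c|=1/n\}$; using open (rather than closed) neighborhoods together with Fatou circumvents this, since only the one-sided inequality $\liminf_k\mathbb{I}_{\{|\varphi_k-c|<1/n\}}\ge\mathbb{I}_{\{|\varphi-c|<1/n\}}$ is needed, and that inequality is precisely what uniform convergence supplies. An entirely symmetric argument using the closed tubes $\{x:|\varphi(x)-c|\le 1/n\}$ and the reverse Fatou lemma would instead establish upper semicontinuity of those approximants, leading to the same conclusion.
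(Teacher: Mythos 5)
Your proof is correct, and it rests on the same two ingredients as the paper's own argument: the shrinking tubular neighborhoods $\{x\in D:|\varphi(x)-c|<1/n\}$ of the level set, and continuity from above of Lebesgue measure on the finite-measure set $D$. The organization differs, though. The paper argues directly that every sublevel set $M_t=\{\varphi:\mathfrak{M}_c(\varphi)<t\}$ is open, via contradiction: if $\varphi_k\to\varphi_0$ uniformly with $\mathfrak{M}_c(\varphi_k)\ge t$, then $\{x:\varphi_k(x)=c\}\subset B_k:=\{x:|\varphi_0(x)-c|<1/k\}$ and $\mathfrak{m}(B_k)\downarrow\mathfrak{M}_c(\varphi_0)<t$, which is absurd; in other words, the paper proves that $\mathfrak{M}_c$ is upper semicontinuous. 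You instead exhibit $\mathfrak{M}_c$ as a pointwise monotone limit of the lower semicontinuous functionals $g_n$ and invoke the stability of Borel measurability under pointwise limits. Both routes are valid; yours avoids the contradiction argument, but with the open tubes it delivers only measurability of $M_t$ rather than openness, and openness is what the paper explicitly reuses in the subsequent theorem to see that $M=\bigcap_k M_{1/k}$ is Borel (measurability of each $M_{1/k}$ would of course also suffice there). Your closing remark about closed tubes is exactly the bridge between the two proofs: since an infimum of upper semicontinuous functions is upper semicontinuous, the closed-tube variant $\mathfrak{M}_c=\inf_n \mathfrak{m}(\{x:|\varphi(x)-c|\le 1/n\})$ recovers the paper's stronger conclusion that the sublevel sets are open.
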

\begin{proof}
It suffices to verify for any $t\in\mathbb{R}$, the set $M_t:=\{\varphi\in C(\bar{D})| \mathfrak{M}_c(\varphi)<t\}\in \Sigma$.  Since $\mathfrak{M}_c$ is non-negative, for $t\leq 0$, we know that $M_t=\varnothing$ and hence measurable. For $t>0$, it can be obtained that $M_t$ is open. In fact, if it is not open, then there exists $\varphi_0\in M_t$, such that $\varphi_0$ is not an interior point. That means that for every $n\in\mathbb{N}$, there exists $\varphi_n$, even if it holds that $\|\varphi_n-\varphi_0\|_\infty<\frac{1}{n}$, we still have $\varphi_n\notin M_t$. Therefore we have $\mathfrak{m}(\{x|\varphi_n(x)=c\})\geq t$. Moreover, by the construction of $\varphi_n$ and the triangle inequality, we have $\{x|\varphi_n(x)=c\}\subset B_n:=\{x|\|\varphi_0(x)-c\|_\infty <\frac{1}{n}\}$. Noting that 
\begin{align*}
\{x|\varphi_0(x)=c\}=\bigcap_{n=1}^\infty B_n.
\end{align*}
and that $B_n$ is decreasing, we can conclude that
\begin{align*}
t\leq \lim_{n\rightarrow\infty}\mathfrak{m}(\{x|\varphi_n(x)=c\})\leq \lim_{n\rightarrow\infty}\mathfrak{m}(B_n)=\mathfrak{m}(\{x|\varphi_0(x)=c\})<t.
\end{align*}
This is a contradiction. So $M_t$ is open for $t>0$. Then we know that $\mathfrak{M}_c$ is $\mu_0$-measurable.
\end{proof}
Introduce the random level set 
\begin{align}\label{levaa1}
D_c=D_c(\varphi(\cdot, a))=D_c(a):=\{x|\varphi(x, a)=c\}.
\end{align}
By Lemma \ref{lemm4.1}, it follows that $\mathfrak{m}(D_c)$ is a random variable on both $(\mathbb{R}^J, \mathfrak{B}_J, \mu_a)$ and $(C(\bar{D}), \Sigma, \mu_0)$. Next we demonstrate that $\mathfrak{m}(D_c)$ vanishes almost surely on these two measure spaces.
\begin{thm}
Let the coefficient vector $a$ in \eqref{rbf4.1} be drawn from the $J$-dimensional Gaussian distribution. The level set function is given by \eqref{rbf4.1}. The random level set $D_c$ of $\varphi$ is defined by \eqref{levaa1}. Then 
\begin{enumerate}
\item $\mathfrak{m}(D_c)=0$ $\mu_a$-almost surely;
\item  $\mathfrak{m}(D_c)=0$ $\mu_0$-almost surely.
\end{enumerate}
\end{thm}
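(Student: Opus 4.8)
The plan is to exploit the spatial real-analyticity of $\varphi(\cdot, a)$. Each Gaussian radial basis function $p(\cdot, \theta_s; \lambda_s) = \exp(-|x-\theta_s|^2/(2\lambda_s^2))$ is real-analytic on all of $\mathbb{R}^2$, so for every fixed $a$ the finite sum $\varphi(\cdot, a) = \sum_{s=1}^J a_s p(\cdot, \theta_s; \lambda_s)$ is real-analytic, and hence so is $g_a := \varphi(\cdot, a) - c$ on the connected set $\mathbb{R}^2$. I would then invoke the classical fact that the zero set of a real-analytic function on a connected open set is either the whole set (when the function vanishes identically) or has Lebesgue measure zero. Thus it remains only to control the exceptional set $N := \{a \in \mathbb{R}^J : \varphi(\cdot, a) \equiv c\}$ on which $g_a \equiv 0$.

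To analyze $N$ I would split on $c$. When $c \neq 0$, every Gaussian term decays, so $\varphi(x, a) \to 0$ as $|x| \to \infty$; a constant limit forces $c=0$, whence $N = \varnothing$. When $c = 0$, the set $N$ is exactly the kernel of the linear map $a \mapsto \sum_s a_s p(\cdot, \theta_s; \lambda_s)$ into $C(\bar D)$. Since the Gaussian RBFs with distinct centers are linearly independent (the Gaussian kernel being strictly positive definite), this kernel reduces to $\{0\}$; even without independence it is a proper linear subspace of $\mathbb{R}^J$, because not all the $p(\cdot, \theta_s; \lambda_s)$ vanish. In either case $N$ is contained in a proper subspace and is therefore $\mu_a$-null, the $J$-dimensional Gaussian being absolutely continuous with respect to Lebesgue measure.

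Combining these observations proves claim (1): for every $a \notin N$ the function $g_a$ is a non-trivial real-analytic function, so $\mathfrak{m}(D_c(a)) \leq \mathfrak{m}(\{x \in \mathbb{R}^2 : g_a(x) = 0\}) = 0$, and since $\mu_a(N)=0$ we obtain $\mathfrak{m}(D_c)=0$ $\mu_a$-almost surely. For claim (2) I would pass to the push-forward. By Lemma \ref{lemm4.1} the set $B := \{\varphi \in C(\bar D) : \mathfrak{M}_c(\varphi) > 0\}$ belongs to $\Sigma$, and since $\mu_0$ is the push-forward of $\mu_a$ under $\mathcal{T}$ we have $\mu_0(B) = \mu_a(\mathcal{T}^{-1}(B))$. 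But $\mathcal{T}^{-1}(B) = \{a : \mathfrak{m}(D_c(a)) > 0\} \subseteq N$, which is $\mu_a$-null, so $\mu_0(B) = 0$, giving $\mathfrak{m}(D_c)=0$ $\mu_0$-almost surely.

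The only genuinely delicate point is the measure-zero property of zero sets of non-trivial real-analytic functions, together with the correct identification of the exceptional set $N$; once these are in hand, the Gaussian decay at infinity, the linear-independence (or proper-subspace) argument, and the push-forward relation are routine bookkeeping.
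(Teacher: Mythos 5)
Your proof is correct, but it runs along a genuinely different line from the paper's. The paper argues pointwise in $x$: for each fixed $x$ the map $a\mapsto\varphi(x,a)$ is a nondegenerate scalar Gaussian (nondegenerate because the Gaussian RBFs are strictly positive, so $\sum_s p(x,\theta_s;\lambda_s)^2>0$), hence $\mu_a(\{a\mid\varphi(x,a)=c\})=0$, and then Fubini--Tonelli applied to $\mathbb{E}[\mathfrak{m}(D_c)]$ yields claim (1); claim (2) follows by the same push-forward step you use. You instead argue pathwise in $a$: for fixed $a$ the function $\varphi(\cdot,a)-c$ is real-analytic on the connected set $\mathbb{R}^2$, so its zero set is Lebesgue-null unless the function vanishes identically, and you then check that the exceptional set $N$ of coefficients is empty for $c\neq0$ (by decay at infinity) and a proper linear subspace, hence $\mu_a$-null, for $c=0$. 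Your route buys a strictly stronger conclusion --- $\mathfrak{m}(D_c(a))=0$ for \emph{every} $a$ outside an explicit deterministic null set, indeed for every $a$ when $c\neq0$ --- and it does not require Fubini or the pointwise nondegeneracy of the field. The paper's route is weaker in that sense but more robust: it uses only that the field is Gaussian with nonvanishing pointwise variance, so it transfers verbatim to the Whittle--Mat\'ern prior and to non-analytic discretizations (e.g.\ piecewise-linear finite elements), where your analyticity argument would break down. One minor bookkeeping point in your version: to state claim (1) you still need $a\mapsto\mathfrak{m}(D_c(a))$ to be measurable (containment in a null set suffices only for the completed measure); this is supplied by Lemma \ref{lemm4.1} together with the measurability of $\mathcal{T}$, exactly as the paper notes just before the theorem.
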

\begin{proof}
For fixed $x\in D$, $\varphi(x)$ acts as a bounded linear functional on $C(\bar{D})$ and therefore $\varphi(x, \cdot)$ is a real valued Gaussian random variable, which implies
$\mu_a(\{a | \varphi(x, a)=c\})=0$ \cite{iglesias1}. 
Moreover, it follows that
\begin{align}
\label{leva1}&\mathbb{E}[\mathfrak{m}(D_c)]=\int_{\mathbb{R}^J} \mathfrak{m}(D_c)d\mu_a=\int_{\mathbb{R}^J}\int_{\mathbb{R}^d}\mathbb{I}_{\{x|\varphi(x, \omega)=c\}}dxd\mathbb{P}(\omega)\\
&=\int_{\mathbb{R}^J}\int_{\mathbb{R}^d}\mathbb{I}_{\{(x, a)|\varphi(x, a)=c\}}dxd\mu_a=\int_{\mathbb{R}^d}\int_{\mathbb{R}^J}\mathbb{I}_{\{(x, a)|\varphi(x, a)=c\}}d\mu_adx\nonumber\\
&=\int_{\mathbb{R}^d}\int_{\mathbb{R}^J}\mathbb{I}_{\{a|\varphi(x, a)=c\}}d\mu_adx=\int_{\mathbb{R}^d} \mu_a(\{a|\varphi(x, a)=c\})dx=0.\nonumber
\end{align}
Since $\mathfrak{m}(D_c)\geq 0$, it follows that $\mathfrak{m}(D_c)=0$, $\mu_a$-almost surely. Next, we show that $\mathfrak{m}(\{x\in D| \varphi(x, \omega)=c\})=0$  $\mu_0$-almost surely. Set
\begin{align*}
M&:=\{\varphi|\mathfrak{m}(\{x| \varphi(x)=c\})=0\}\\
&=\cap_{k=1}^\infty \{\varphi|\mathfrak{m}(\{x| \varphi(x)=c\})<\frac{1}{k}\}:=\cap_{k=1}^\infty M_{\frac{1}{k}}.
\end{align*}
Since $M_{\frac{1}{k}}$ is open for any $k>0$, the set $M$ is Borel set and measurable. 
Therefore, we have
\begin{align*}
&\mu_0(M)=\mu_0 (\{\varphi|\mathfrak{m}(\{x|\varphi(x)=c\})=0\})\\
&=\mu_a(\{a|\mathfrak{m}(\{x|\sum_{s=1}^J a_s p(x, \theta_s; \lambda_s)=c\})=0\})=1.
\end{align*}
The above equality shows that  $\mathfrak{m}(\{x\in D| \varphi(x, \omega)=c\})=0$ $\mu_0$-almost surely. 
\end{proof}
\begin{remark}
According to Proposition \ref{iglesiaspro1}, the level set map $\mathcal{G}: C(\bar{D})\rightarrow L^q(D)$ is continuous at the radial basis expansion $\varphi$. With this point, we can obtain the well-posedness of the posterior distribution from the theories in \cite{iglesias1}. 
\end{remark}
The well-posedness of the posterior distribution is characterized in the sense of the Hellinger metric. Recall that the Hellinger distance between 􏱄$\mu(a|b)$ and 􏱄$\mu(a|{b^\prime})$ is defined as
\begin{align*}
d_{\rm H}\left(\mu(a|b), \mu(a|{b^\prime})\right)=\left(\frac{1}{2} \int_{\mathbb{R}^J}\left(\sqrt{\frac{d \mu(a|b)}{d \mu_a}}-\sqrt{\frac{d \mu(a|{b^\prime})}{d\mu_a}}\right)^{2} d\mu_a\right)^{\frac{1}{2}}
\end{align*}
for any measure $\mu_a$􏱅 with respect to which $\mu(a|b)$􏱄 and $\mu(a|{b^\prime})$ are absolutely continuous.
\begin{thm}
Denote $\Gamma=\delta^2 I$. The prior for $a$ is the $J$-dimensional Gaussian.
The posterior distribution $\mu(a|b)$ is locally Lipschitz with respect to $b$ in the Hellinger distance: for all $b$, $b^\prime$ with $\max\{|b|_\Gamma, |b^\prime|_\Gamma\}<r$, there exists a constant $C=C(r)>0$ such that
\begin{align}\label{concl1}
d_{\rm H}(\mu(a|b), \mu(a|{b^\prime}))\leq C|b-b^\prime|_\Gamma.
\end{align}
\end{thm}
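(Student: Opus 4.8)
The plan is to follow the standard template for well-posedness of Bayesian inverse problems (as in \cite{stuart} and \cite{iglesias1}), but to exploit the fact that here the forward map $\mathcal{L}$ is \emph{uniformly bounded}. First I would express the posterior as a change of measure against the prior,
$$\frac{d\mu(a|b)}{d\mu_a}(a)=\frac{1}{Z(b)}\exp(-\Psi(a;b)),\qquad Z(b)=\int_{\mathbb{R}^J}\exp(-\Psi(a;b))\,d\mu_a(a),$$
so that the Hellinger distance reduces to an $L^2(\mu_a)$ comparison of the two normalized square-root densities. The decisive structural observation is that $\mathcal{L}(a)=\mathcal{H}\circ\mathcal{G}\circ\mathcal{T}(a)$ is bounded uniformly in $a$: since $\mathcal{G}(\varphi)=f=\sum_{l=1}^n w_l\mathbb{I}_{D_l}$ is piecewise constant with fixed values $w_l$ on a partition of the bounded domain $D$, one has $\|f\|_{L^2(D)}^2\le(\max_l w_l^2)\,\mathfrak{m}(D)$, and $\mathcal{H}$ is bounded, whence $|\mathcal{L}(a)|\le C_0$ for every $a$. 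This removes any integrability obstruction that would otherwise arise from growth of the forward map against the Gaussian tails of $\mu_a$.

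Next I would record the two elementary estimates that drive the proof. Writing $\Psi(a;b)=\tfrac12|\mathcal{L}(a)-b|_\Gamma^2$ and applying the polarization identity together with Cauchy--Schwarz in the $\Gamma$-inner product,
$$|\Psi(a;b)-\Psi(a;b')|=\tfrac12\big|\langle 2\mathcal{L}(a)-b-b',\,b'-b\rangle_\Gamma\big|\le C_1(r)\,|b-b'|_\Gamma$$
uniformly in $a$ whenever $|b|_\Gamma,|b'|_\Gamma<r$, with $C_1(r)$ absorbing $C_0$ and $r$. Because $\Psi\ge0$ and $\mu_a$ is a probability measure, $Z(b)\le 1$, while the uniform bound $\Psi(a;b)\le C(r)$ on the set $|b|_\Gamma<r$ yields a positive lower bound $Z(b)\ge Z_-(r)>0$; combined with the Lipschitz bound on $\Psi$ and the inequality $|e^{-s}-e^{-t}|\le|s-t|$ for $s,t\ge0$, this gives $|Z(b)-Z(b')|\le C_2(r)\,|b-b'|_\Gamma$.

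Finally I would expand the Hellinger integrand, add and subtract the mixed term $Z(b')^{-1/2}e^{-\Psi(a;b)/2}$, and use $(x+y)^2\le 2x^2+2y^2$ to split $2d_{\rm H}^2$ into a \emph{normalization} contribution, controlled by $|Z(b)^{-1/2}-Z(b')^{-1/2}|^2\le\tfrac14 Z_-(r)^{-3}|Z(b)-Z(b')|^2$, and an \emph{integrand} contribution, controlled by
$$Z(b')^{-1}\int_{\mathbb{R}^J}\big|e^{-\Psi(a;b)/2}-e^{-\Psi(a;b')/2}\big|^2\,d\mu_a\le \tfrac14 Z_-(r)^{-1}\sup_a|\Psi(a;b)-\Psi(a;b')|^2.$$
Both contributions are $O(|b-b'|_\Gamma^2)$ by the previous step, giving $d_{\rm H}(\mu(a|b),\mu(a|b'))\le C(r)\,|b-b'|_\Gamma$. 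The main difficulty is not analytic but organizational: one must track the $r$-dependence of the constants consistently through the lower bound on $Z$ and the splitting step. The uniform boundedness of $\mathcal{L}$ is what makes every integral above a trivial consequence of $\mu_a$ being a probability measure, so no tail or moment estimates on the Gaussian prior are actually needed.
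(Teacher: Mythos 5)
Your proposal is correct and follows essentially the same route as the paper's proof: the standard Stuart-type splitting of $2d_{\rm H}^2$ into an integrand term and a normalization term, a strictly positive lower bound on the normalizing constants $Z, Z'$, and a local Lipschitz estimate for $\Psi(a;\cdot)$ obtained from polarization and Cauchy--Schwarz. The one point where you diverge is in justifying $Z\geq Z_-(r)>0$: you use the (correct and cleaner) observation that $\mathcal{L}(a)=\mathcal{H}\circ\mathcal{G}\circ\mathcal{T}(a)$ is uniformly bounded in $a$ because $\mathcal{G}$ outputs piecewise constant functions with fixed values $w_l$ on the bounded domain $D$, whereas the paper instead asserts a linear growth bound $|\mathcal{K}(a)|_\Gamma\leq M|a|$ and restricts the integral to the unit ball --- your version avoids that questionable intermediate claim and reaches the same conclusion.
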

\begin{proof}
Denote 
\begin{align*}
Z=\int_{\mathbb{R}^J} \exp(-\Psi(a; b))d\mu_a, Z^\prime=\int_{\mathbb{R}^J} \exp(-\Psi(a; b^\prime))d\mu_a.
\end{align*}
It is obvious that $1\geq Z>0$ and $1\geq Z'>0$. In fact, we can conclude that $Z, Z'$ are strictly positive. For instance, for $Z$, we have that
\begin{align*}
Z\geq \int_{|a|\leq 1} \exp(-(|b|_\Gamma+|\mathcal{K}(a)|_\Gamma))d\mu_a.
\end{align*}
By $\mathcal{K}=\mathcal{H}\circ\mathcal{G}\circ\mathcal{T}$, we know that
\begin{align*}
|\mathcal{K}(a)|_\Gamma\leq M |a|.
\end{align*}
Therefore, it follows that
\begin{align}
Z&\geq \int_{|a|\leq 1}\exp(-(|b|_\Gamma+M|a|))d\mu_a\nonumber\\
&\geq \exp(-(|b|_\Gamma+M))\mu_a(|a|\leq 1):=M_1.\label{positi}
\end{align}
Since $\mu_a$ is $J$-dimensional Gaussian, the unit ball has strictly positive measure, i.e., $\mu_a(|a|\leq 1)>0$. This shows that the posterior distribution is well-defined probability distribution.
By the definition of Hellinger distance, we have that
\begin{align*}
&2(d_{\rm H}(\mu(a|b), \mu(a|{b^\prime})))^2=\int_{\mathbb{R}^J}\left(\sqrt{\frac{d \mu(a|b)}{d \mu_a}}-\sqrt{\frac{d \mu(a|{b^\prime})}{d\mu_a}}\right)^{2} d\mu_a\\
&=\int_{\mathbb{R}^J} (\frac{\exp(-\frac{\Psi(a; b)}{2})}{Z^{\frac{1}{2}}}-\frac{\exp(-\frac{\Psi(a; b')}{2})}{Z'^{\frac{1}{2}}})^2d\mu_a\leq I_1+I_2,
\end{align*}
where 
\begin{align*}
&I_1:= \frac{2}{Z} \int_{\mathbb{R}^J} (\exp(-\frac{\Psi(a; b)}{2})-\exp(-\frac{\Psi(a; b')}{2}))^2d\mu_a,\\
& I_2:=2|Z^{-1 / 2}-(Z')^{-1 / 2}|^{2}\int_{\mathbb{R}^J} \exp(-\Psi(a; b')) d\mu_a.
\end{align*}
For $I_1$, by \eqref{positi} we have
\begin{align}\label{I1es}
I_1\leq \frac{2}{M_1}|b-b'|_\Gamma^2.
\end{align}
For $I_2$, it follows that
\begin{align}
 &\left|Z^{-1 / 2}-(Z')^{-1 / 2}\right|^{2}\nonumber \\ 
\leqslant & M \max \left(Z^{-3}, (Z')^{-3}\right)\left|Z-Z'\right|^{2} \label{I2es1}\\ 
\leqslant & M\left|b-b'\right|^{2}. \nonumber
\end{align}
Combining \eqref{I1es} and \eqref{I2es1}, we get the estimation \eqref{concl1}.
\end{proof}

\section{Numerical test}
We characterize the posterior distribution by means of sampling with the preconditioned Crank-Nicolson (pCN) MCMC method developed in \cite{stuart}. In concrete, this algorithm is implemented in the following iteration process:
\begin{center}
{\tiny{
\begin{tabular}{l|l|l}
For $i=0$, initialize &  $\varphi^{(0)}$ &  $a^{(0)}$\\
\hline
\multirow{2}{*}{For $i\geq 0$, propose} & \tiny{$\hat{\varphi}^{(i)}=\sqrt{1-\beta^2}\varphi^{(i)}+\beta \xi^{(i)}$},&\tiny{$\hat{a}^{(i)}=\sqrt{1-\beta^2}a^{(i)}+\beta \xi^{(i)}$}, \\
& where $\xi^{(i)}\sim N(0, C)$. & where $\xi^{(i)}\sim N(0, I)$.\\
\hline
Compute probability $\alpha$ & $\alpha=\min\{1, \frac{\exp(-\Phi(\hat{\varphi}^{(i)}))}{\exp(-\Phi(\varphi^{(i)}))}\}$&$\alpha=\min\{1, \frac{\exp(-\Psi(\hat{a}^{(i)}))}{\exp(-\Psi(a^{(i)}))}\}$\\
\hline
\multirow{2}{*}{Set} &$\varphi^{(i+1)}=\hat{\varphi}^{(i)}$, if $\alpha>$rand,  & $a^{(i+1)}=\hat{a}^{(i+1)}$, if $\alpha>$rand,\\
&$\varphi^{(i+1)}=\varphi^{(i)}$, otherwise. & $a^{(i+1)}=a^{(i)}$, otherwise.\\
\hline
Return to the second line.
\end{tabular}}}
\end{center}


In the numerical tests, we consider some parameter settings for the model problem given in \cite{eller}. Assume that the frequencies $k$ vary from $k_{\min}=50$ Hz to $k_{\max}=10$ kHz and $c_0=343$ms$^{-1}$.   We suppose that the unknown region $D$ is contained in a disk domain $D_1$ and the data receiver is located on $\partial\Omega$. The problem geometry is displayed in Fig. \ref{fig:digit}. The disk domain $D_1$ is partitioned into triangle element. The forward operator is evaluated by using the Gaussian integral in each triangle element in the domain $D_1$. The Whittle-Mat\'ern random field is generated by \eqref{mart3.8} in the same mesh. The radial basis centers $\theta_s$ are taken as the vertices of the triangle elements. In addition, we also use $25$ center points in the RBF expansion. The centers are displayed in Fig. \ref{centersrbf}. We first show several samples drawn according to the Whittle-Mat\'ern random field and the RBF expansion in Fig. \ref{prior1} with $\nu=1$ and $\lambda_s=\frac{\sqrt{2}}{2}$ $(s=1, 2, \cdots, J)$. The first and third rows in Fig. \ref{prior1} display the samples for the level set function $\varphi$ and the second and fourth rows give the following function
\begin{align*}
f(x)=\left\{
\begin{aligned}
1, & &\varphi(x)\geq 0, \\
0, & &\varphi(x)<0.
\end{aligned}
\right.
\end{align*} 

We display some numerical reconstructions for the case of the source with single medium in Fig. \ref{reconstruction1}. The noise level $\delta=0.01$. The prior parameters are taken as in Fig. \ref{prior1}. 
 In the MH algorithm, parameter $\beta$ is taken as $0.01$.  
In these numerical examples, we generate $2\times 10^5$ samples from the posterior distribution. 
 Some samples in the burn-in process being excluded,  the sample means are used as the approximations of the true domains.  The first two rows give the single domain case and the last  row displays two separate domains. 
 
 In Fig. \ref{reconstruction2}, we display the numerical reconstructions for the sources with two different mediums for $\delta=0.01$. In these examples, $3\times 10^5$ samples are drawn.

We use $25$ center points in the RBF expansion and show the numerical reconstructions in Fig. \ref{reconstruction3} with $3\times 10^5$ iteration steps and $\delta=0.02$.

These numerical reconstructions show that the effectiveness of the MCMC. From Fig. \ref{reconstruction2}, we see that the proposed approach can cope well with  some complex domains, which provides us conveniences in many cases. When the center points are reduced to $25$, Fig. \ref{reconstruction3} shows the RBF expansion also works well. This means that by the RBF parameterization, we can reduce the dimensional number of the unknown variable effectively. 

We plot the error functions $\|\mathcal{K}(\varphi)-b\|$ versus the accepted samples number in Fig. \ref{err1} corresponding to Fig. \ref{reconstruction1}-\ref{reconstruction3}. It can be seen that the errors decay at the beginning and subsequently fluctuate within a narrow range. This reflects that the Markov chains become stable after some iterations. 
 

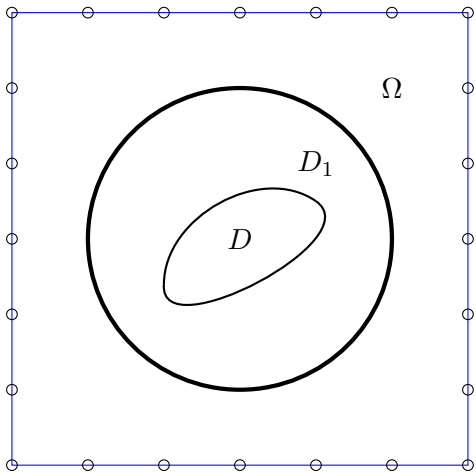
\begin{figure}\centering
 \begin{tikzpicture}
\draw [blue] (-2,-2) rectangle (4,4);
\draw [ultra thick] (1,1) circle [radius=2];
\draw[thick] (0,0.4) to [out=90,in=145] (2,1.5);
\draw[thick] (2,1.5) to [out=-35,in=-95] (0,0.4);
\node at (1,1) {$D$};
\node at (3, 3) {$\Omega$};
\node at (2, 2) {$D_1$};
\draw 
(-2,-2) circle (2pt) 
(-1,-2) circle (2pt) 
(0,-2) circle (2pt) 
(1,-2) circle (2pt) 
(2,-2) circle (2pt) 
(3,-2) circle (2pt) 
(4,-2) circle (2pt)    
(4,-1) circle (2pt)   (4,0) circle (2pt) (4,1) circle (2pt) (4,2) circle (2pt) 
(4,3) circle (2pt) (4,4) circle (2pt) (3,4) circle (2pt) (2,4) circle (2pt)
(1,4) circle (2pt)  (0,4) circle (2pt) (-1,4) circle (2pt) (-2,4) circle (2pt) 
(-2,3) circle (2pt) (-2,2) circle (2pt) (-2,1) circle (2pt) (-2,0) circle (2pt) 
(-2,-1) circle (2pt)
 node[align=right,  below] {};
\end{tikzpicture}
\caption{The problem geometry.}\label{fig:digit}
\end{figure}

\begin{figure}\centering
\includegraphics[width=5cm]{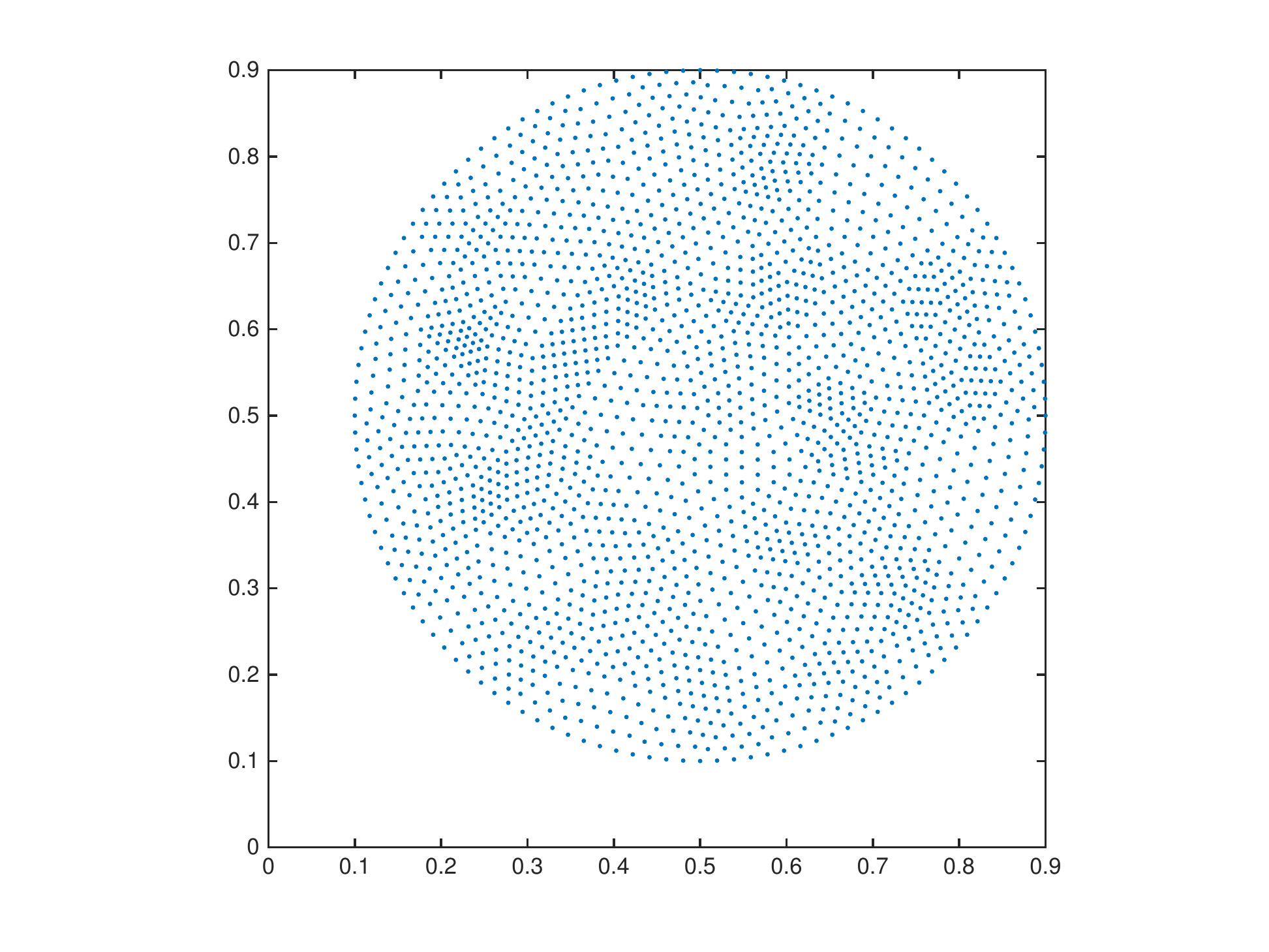}  
\includegraphics[width=5cm]{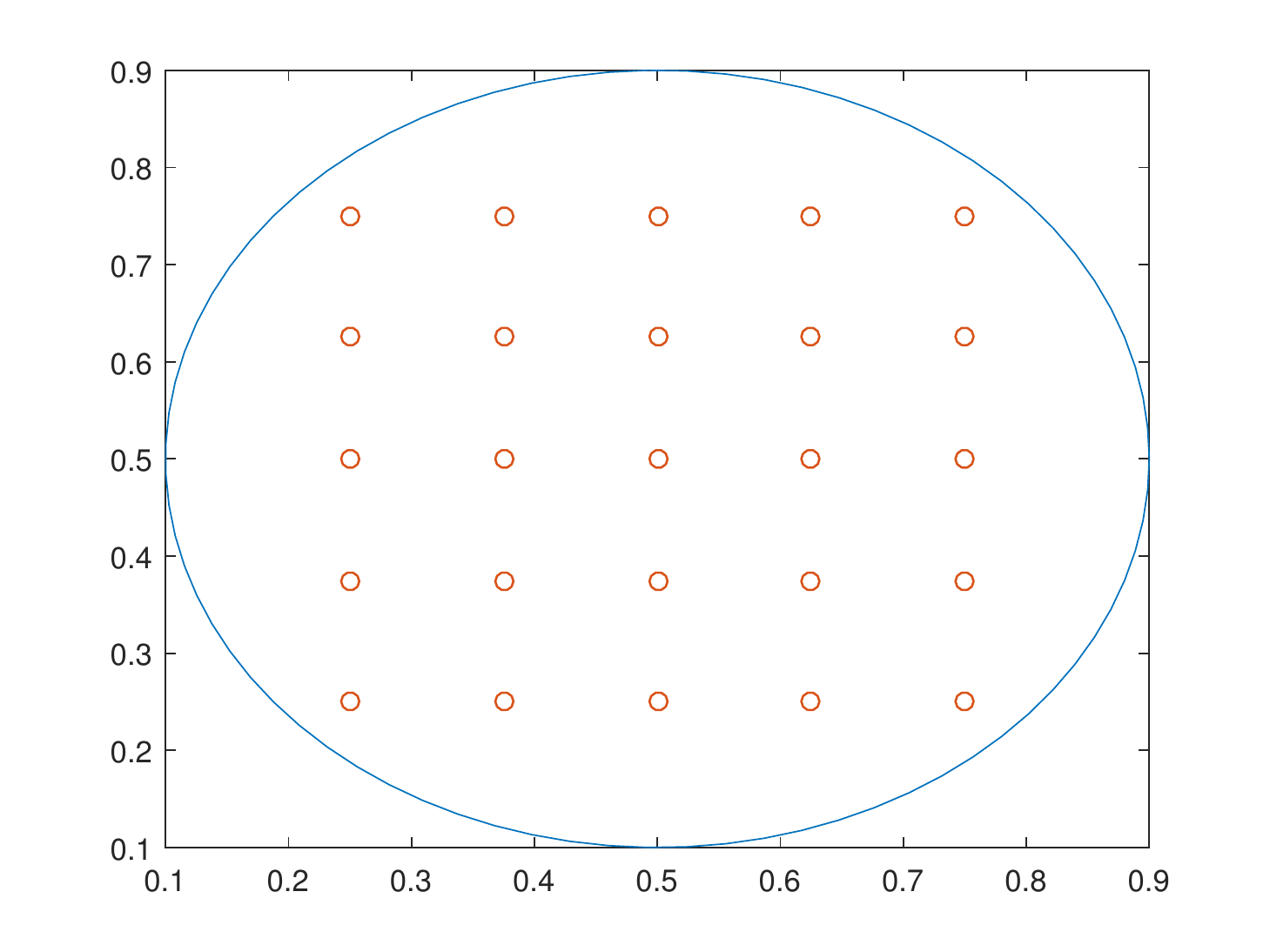}  
\caption{The centers of the RBF function, left: 2161 center points; right: 25 center points.}  
\label{centersrbf} 
\end{figure}

\begin{figure}\centering
\includegraphics[width=3.5cm]{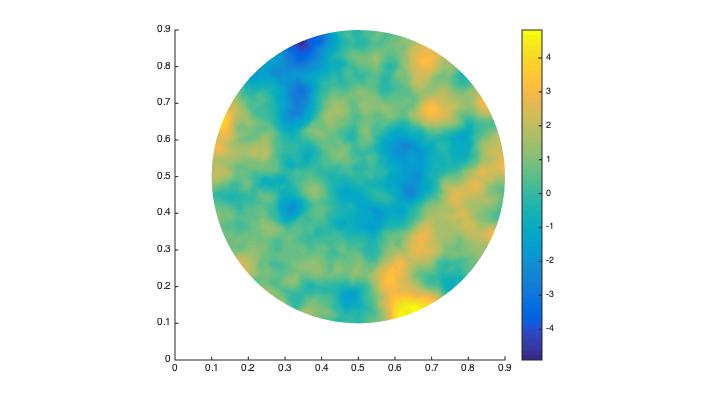}  
\includegraphics[width=3.5cm]{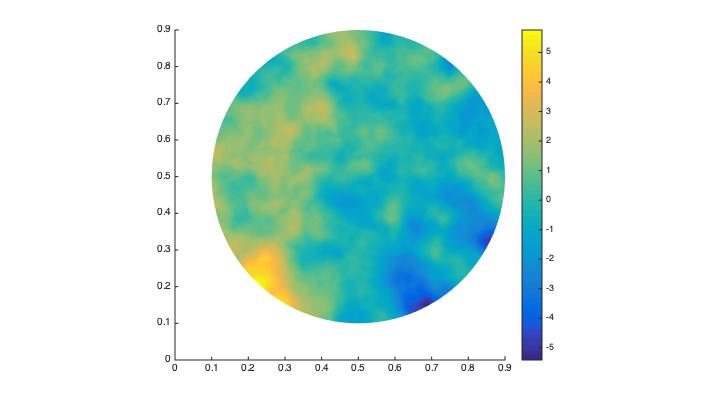}  
\includegraphics[width=3.5cm]{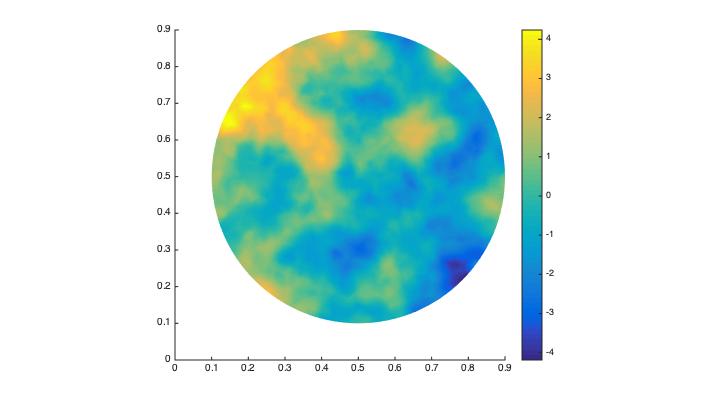}\\
\includegraphics[width=3.5cm]{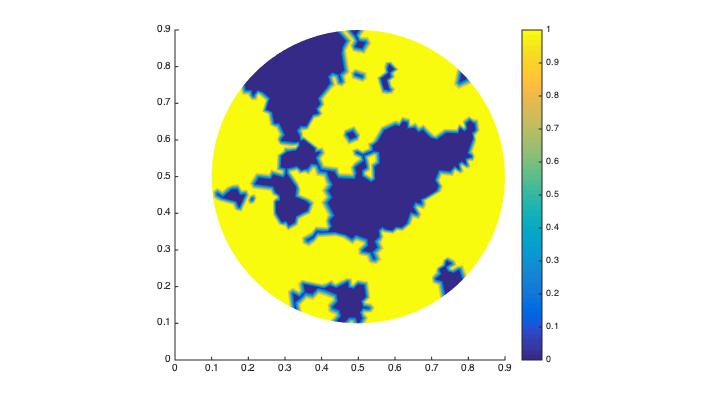}  
\includegraphics[width=3.5cm]{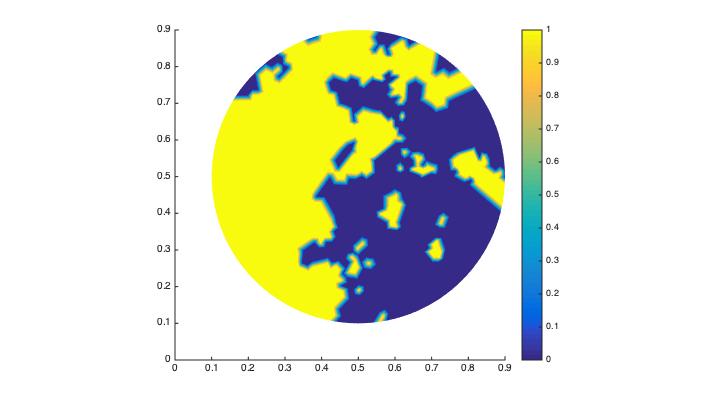}  
\includegraphics[width=3.5cm]{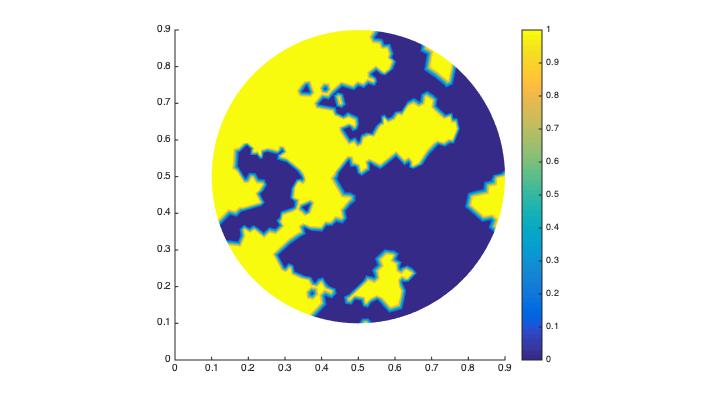}\\
\includegraphics[width=3.5cm]{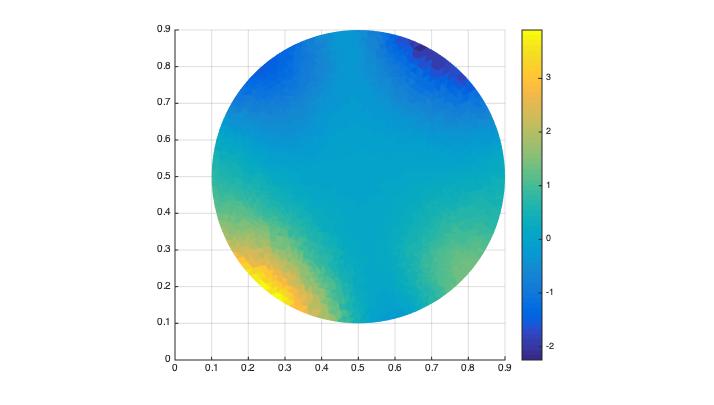}  
\includegraphics[width=3.5cm]{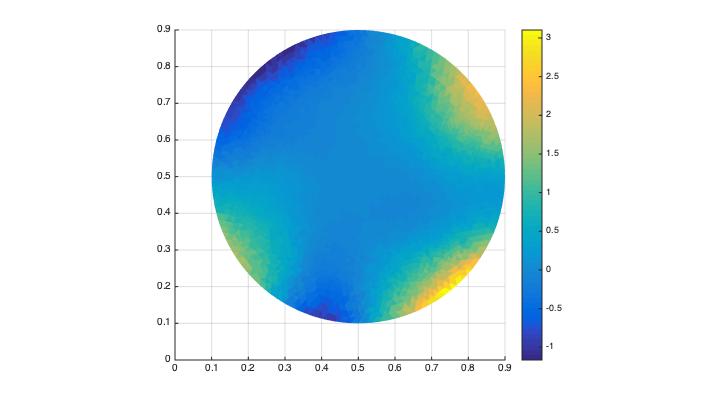}  
\includegraphics[width=3.5cm]{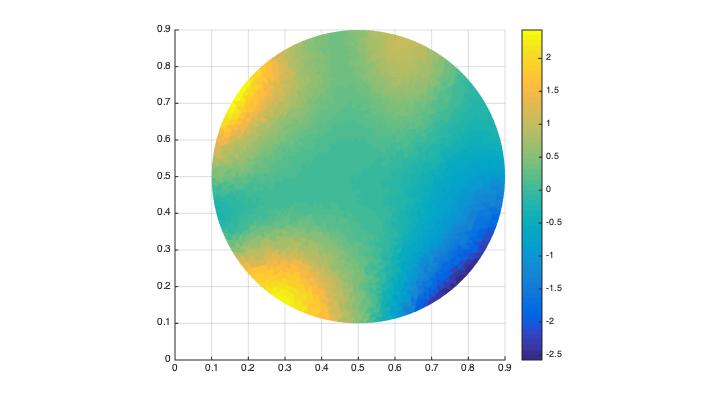}  \\
\includegraphics[width=3.5cm]{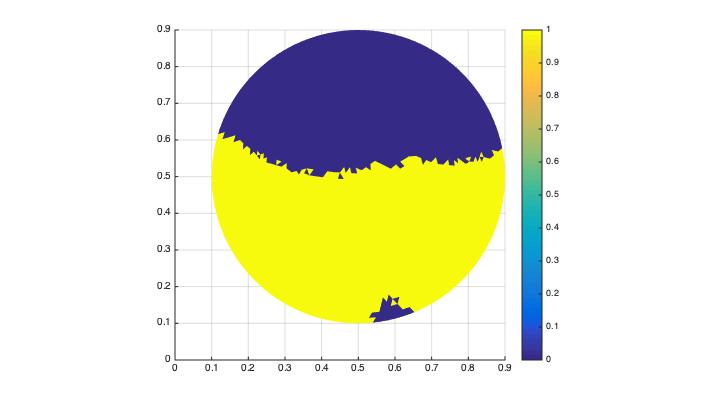}  
\includegraphics[width=3.5cm]{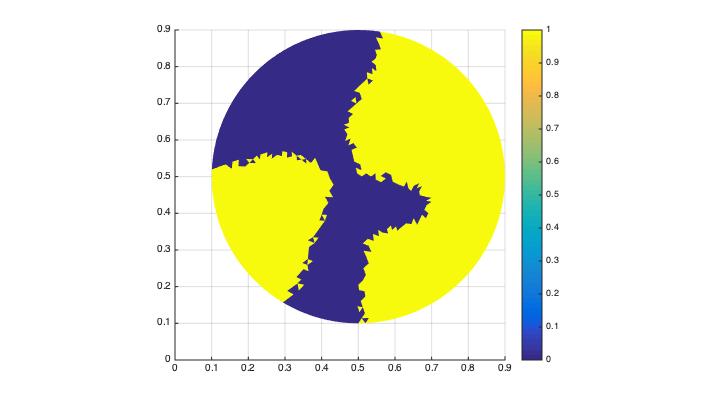}  
\includegraphics[width=3.5cm]{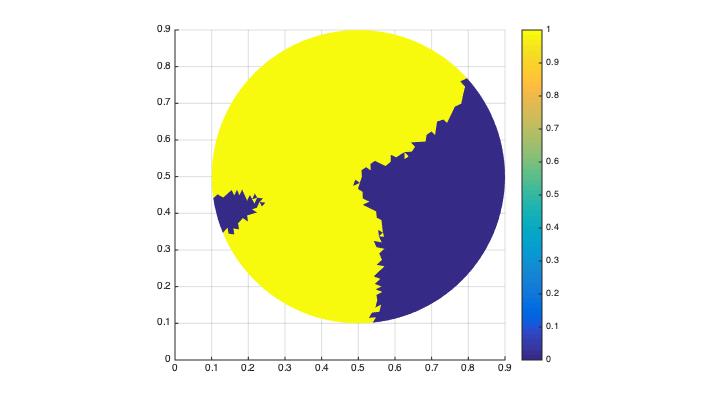}
\caption{Lines 1, 2: Whittle Mat\'ern random field prior samples and the corresponding $f$; Lines 3, 4: RBF expansion samples and the corresponding $f$.}  
\label{prior1} 
\end{figure}


\begin{figure}\centering
\subfigure[Lshape]{
\includegraphics[width=3.5cm]{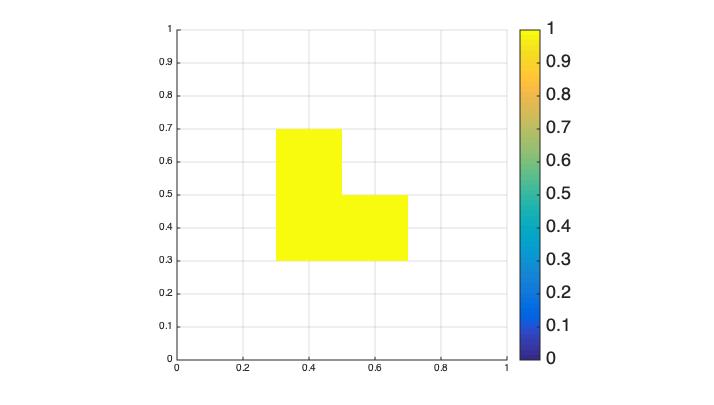} 
\includegraphics[width=3.5cm]{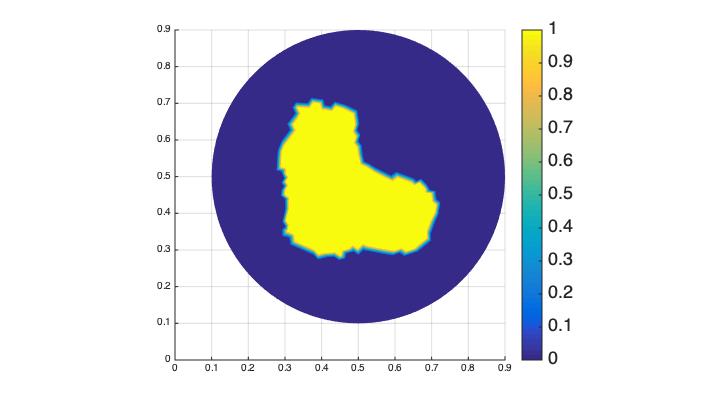}  
\includegraphics[width=3.5cm]{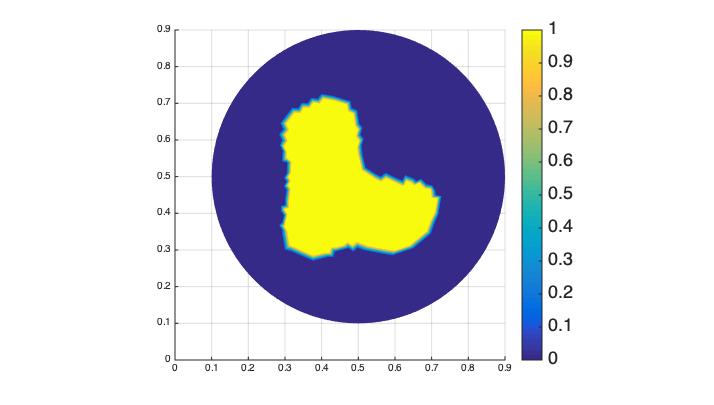}  }
\subfigure[Single disk]{
\includegraphics[width=3.5cm]{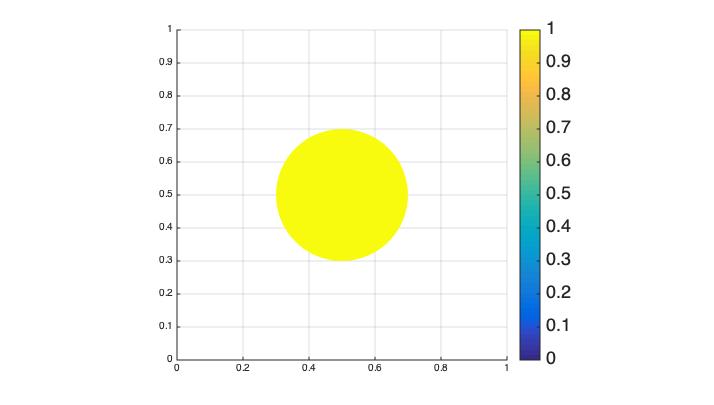} 
\includegraphics[width=3.5cm]{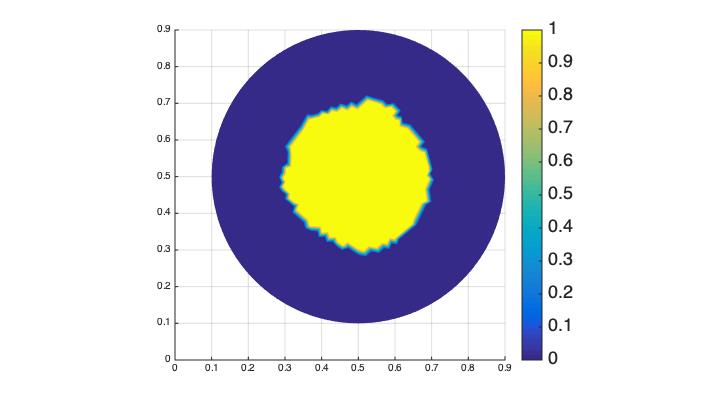}  
\includegraphics[width=3.5cm]{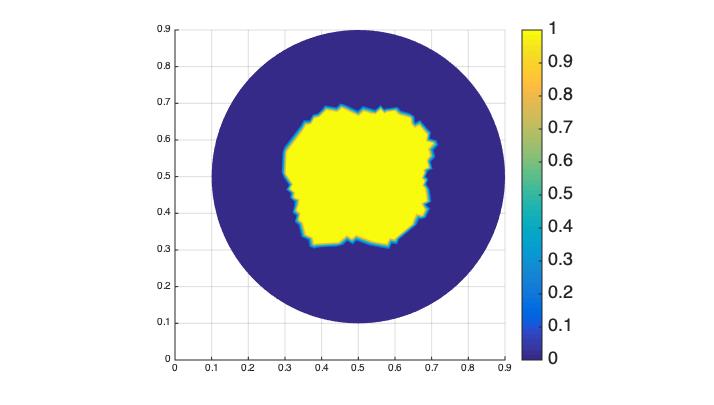} }
\subfigure[Two disks with the same medium value]{
\includegraphics[width=3.5cm]{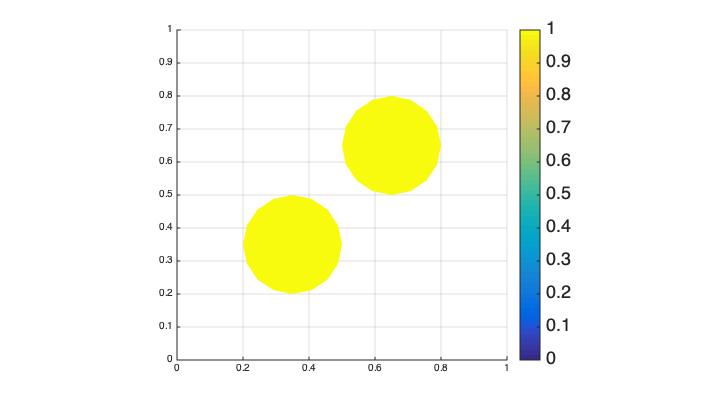} 
\includegraphics[width=3.5cm]{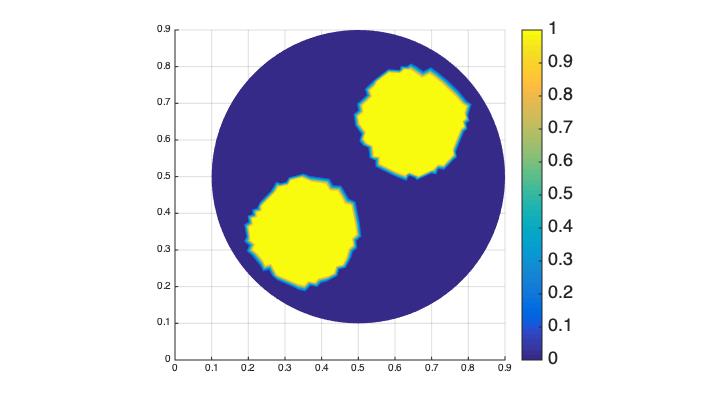}  
\includegraphics[width=3.5cm]{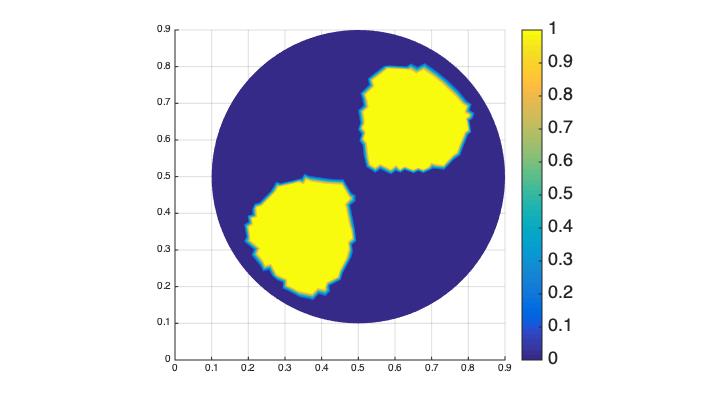}  }
\caption{The numerical reconstructions for $\delta=0.01$. Left: the true domain; Middle: the reconstructions using Mat\'ern field prior; Right: the reconstructions using the RBF expansion.}  
\label{reconstruction1} 
\end{figure}


\begin{figure}\centering
\subfigure[Two disks with different medium values]{
\centering
\includegraphics[width=3.5cm]{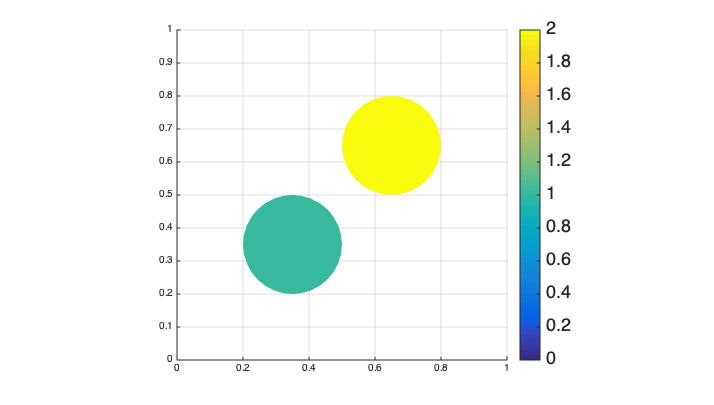} 
\includegraphics[width=3.5cm]{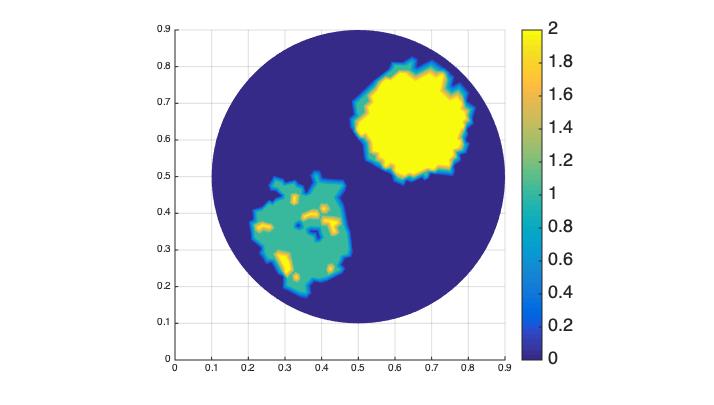}  
\includegraphics[width=3.5cm]{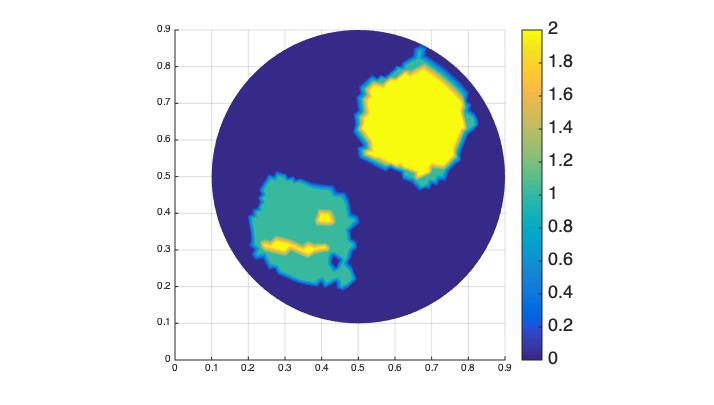} }
\subfigure[Two different domains with different medium values]{
\centering
\includegraphics[width=3.5cm]{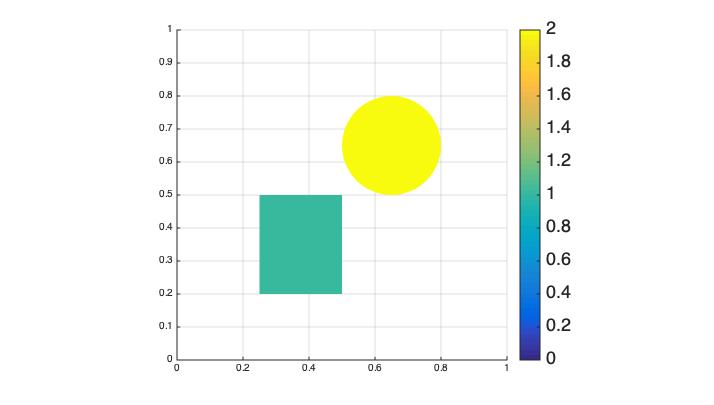} 
\includegraphics[width=3.5cm]{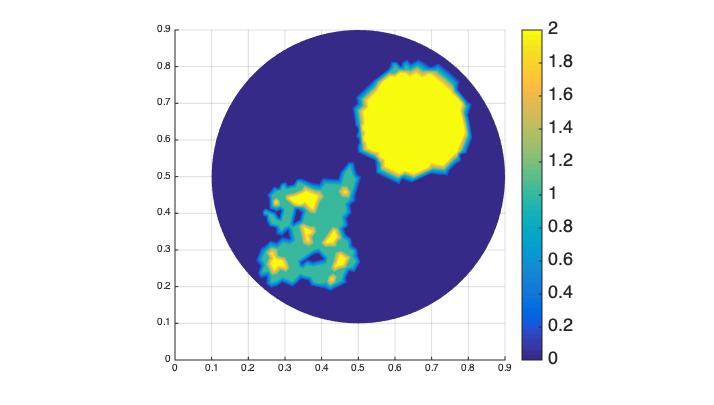}  
\includegraphics[width=3.5cm]{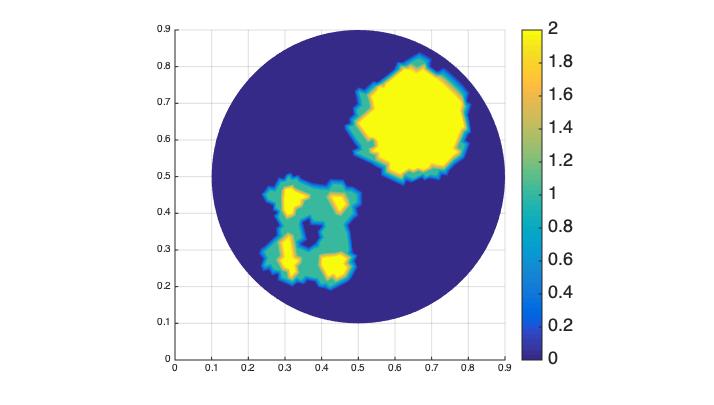} } 
\subfigure[Embedded domain with different medium values]{
\centering
\includegraphics[width=3.5cm]{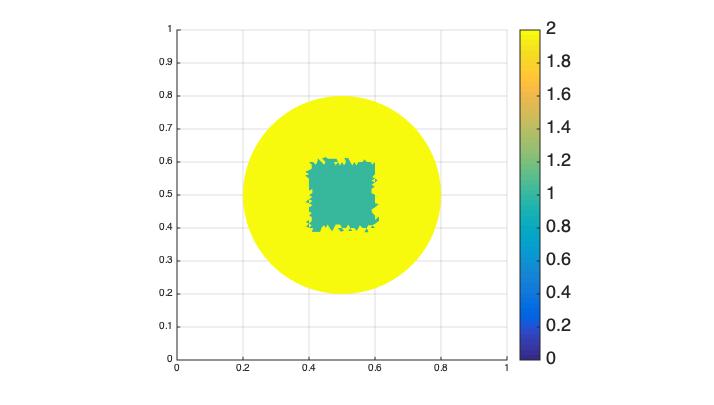} 
\includegraphics[width=3.5cm]{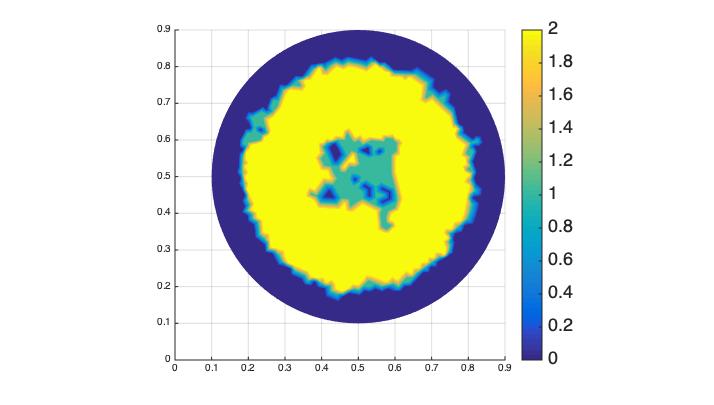}  
\includegraphics[width=3.5cm]{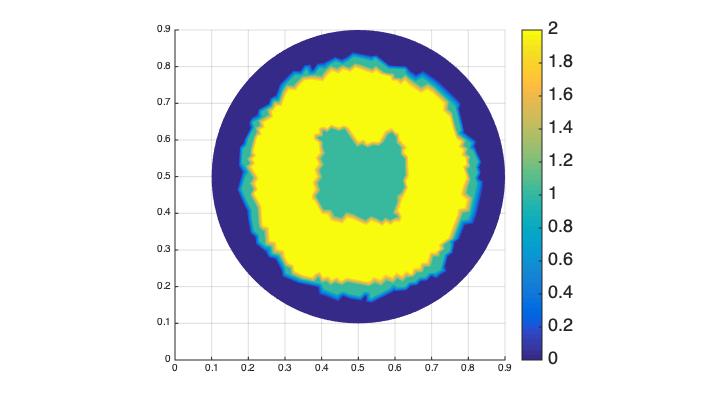}  }
\caption{The numerical reconstructions with $\delta=0.01$ for two kinds of mediums.  Left: the true domain; Middle: the reconstructions using Mat\'ern field prior; Right: the reconstructions using the RBF expansion.}  
\label{reconstruction2} 
\end{figure}




\begin{figure}\centering
\subfigure[Lshape]{
\includegraphics[width=3.2cm]{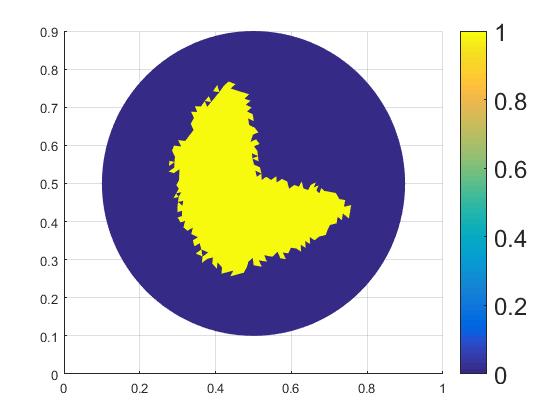} }
\subfigure[Single disk]{
\includegraphics[width=3.2cm]{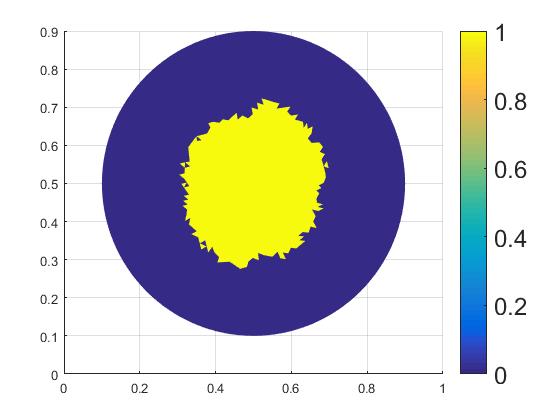} }
\subfigure[Two disks with the same medium value]{
\includegraphics[width=3.2cm]{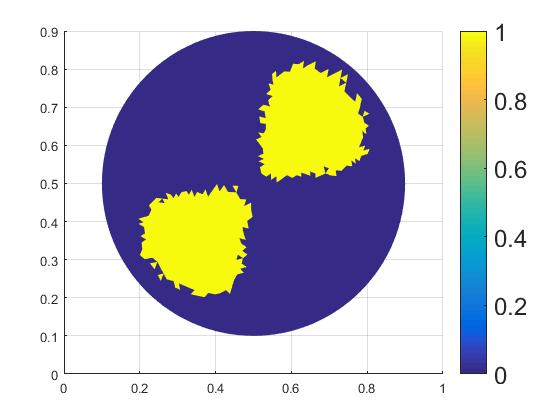} }
\caption{The numerical reconstructions  with $\delta=0.02$ using $25$ centers in the RBF expansion for single medium case.}  
\label{reconstruction3} 
\end{figure}

\begin{figure}\centering
\subfigure[Corresponding to Fig. \ref{reconstruction1}]
{\includegraphics[width=5cm]{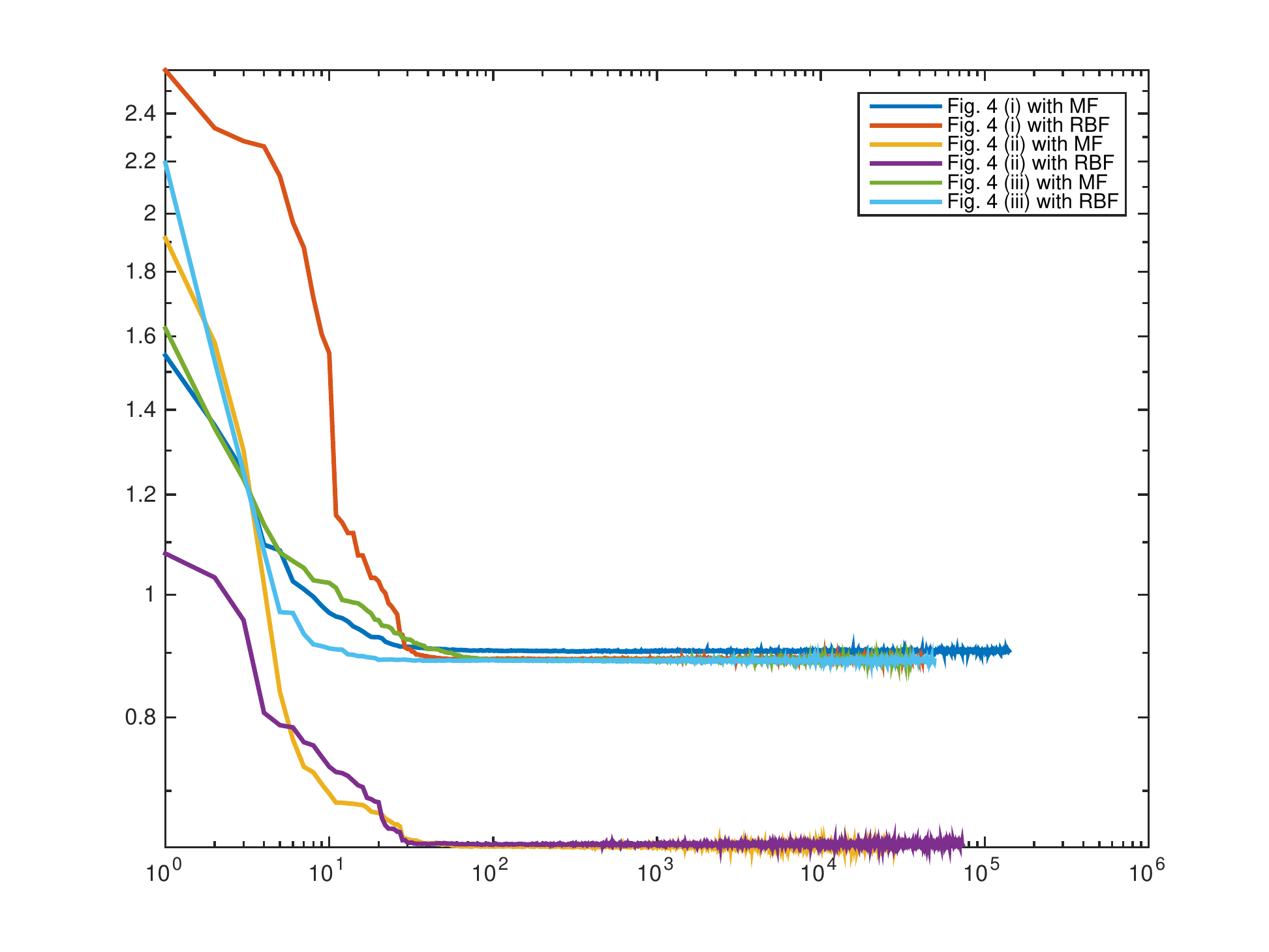} }
\subfigure[Corresponding to Fig. \ref{reconstruction2}]
{\includegraphics[width=5cm]{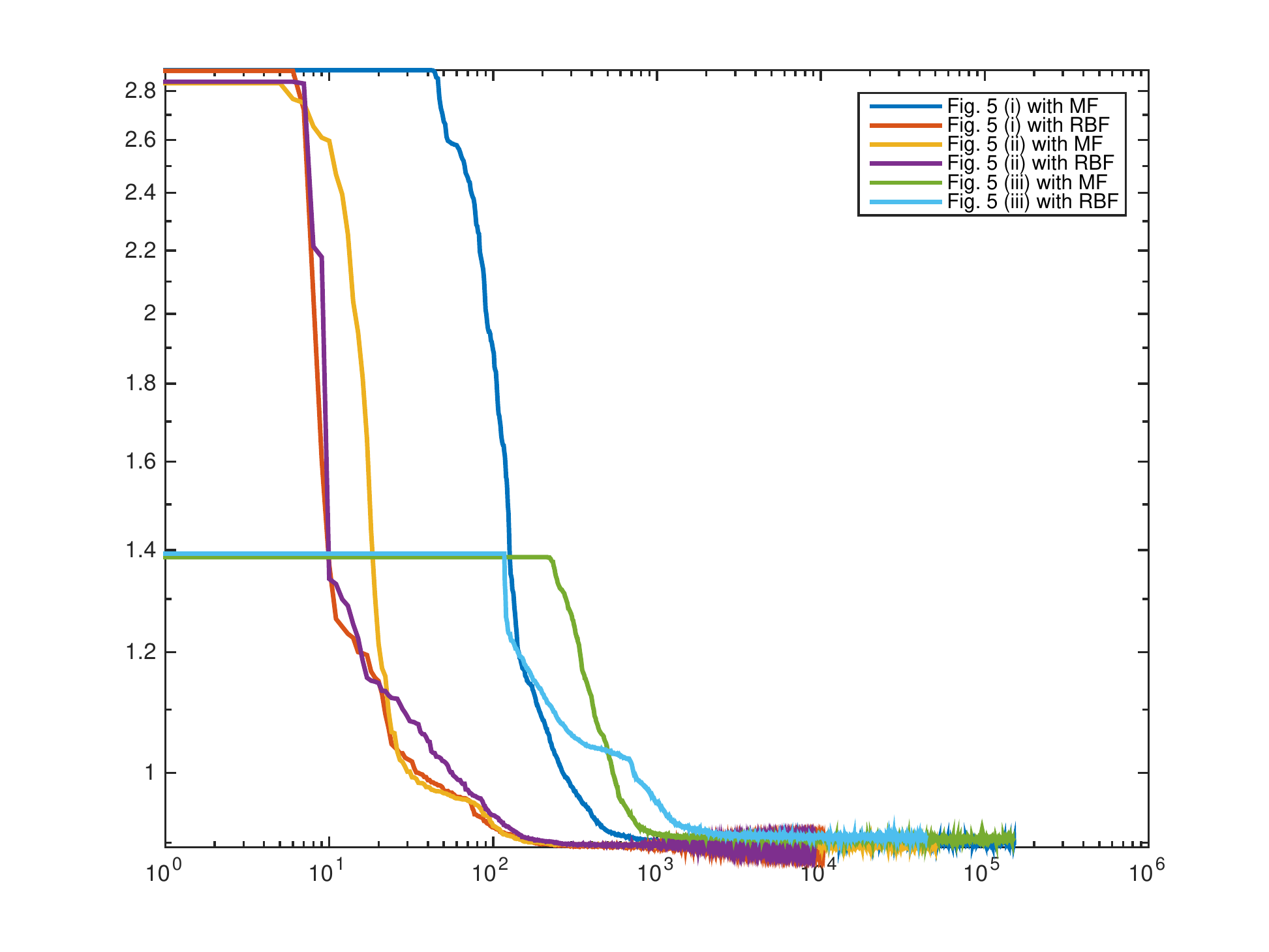} }
\subfigure[Corresponding to Fig. \ref{reconstruction3}]
{\includegraphics[width=5cm]{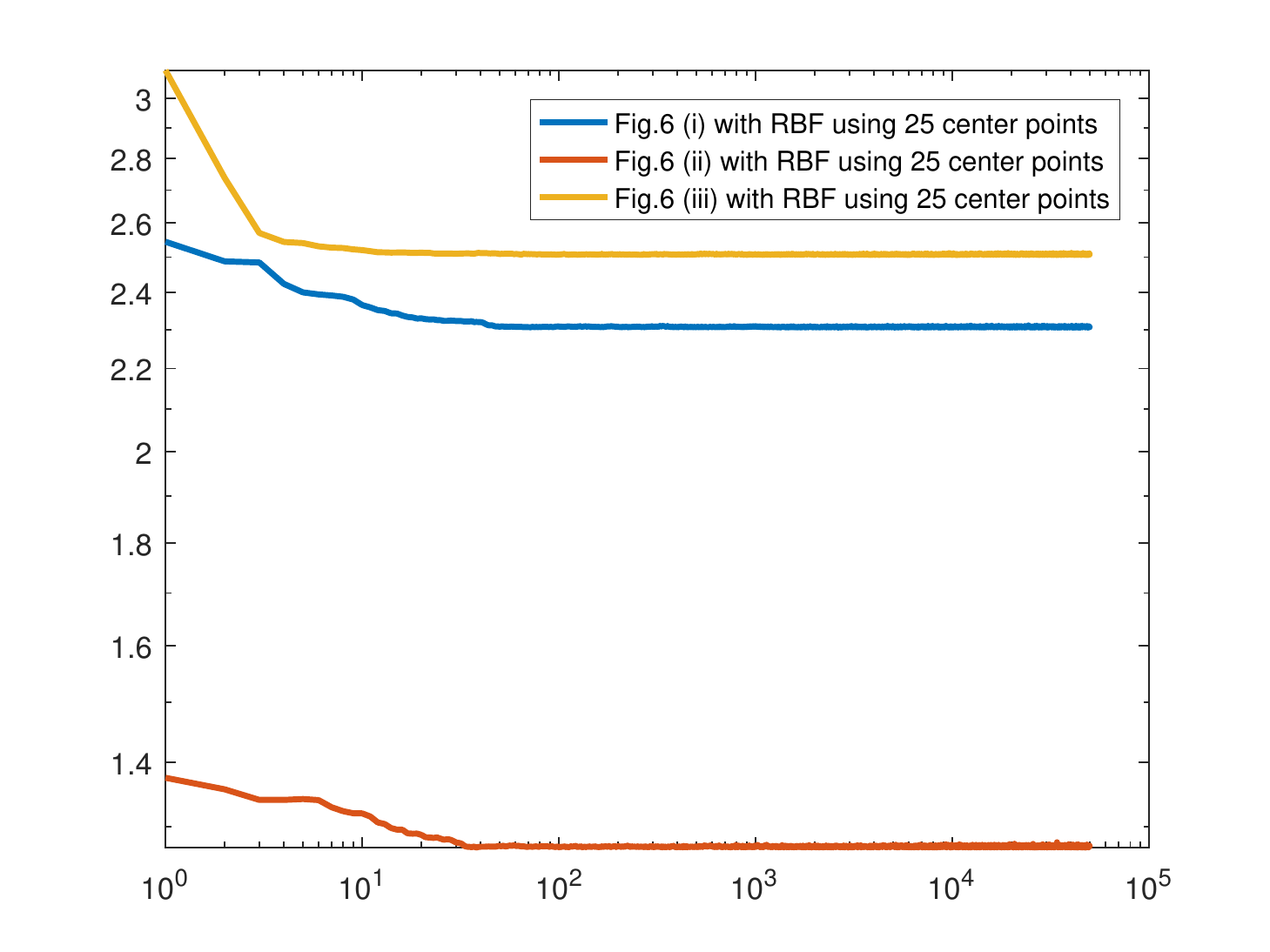} }
\caption{The corresponding model data misfit functions $\|\mathcal{K}(\varphi)-b\|$ for the accepted samples.}  
\label{err1} 
\end{figure}

\end{document}